\numberwithin{equation}{subsection}
\newtheorem{thm}[subsection]{Theorem}
\newtheorem{lem}[subsection]{Lemma}
\newtheorem{prop}[subsection]{Proposition}
\newtheorem{cor}[subsection]{Corollary}
\newtheorem{notation}[subsection]{Notation}
\theoremstyle{definition}
\newtheorem{defn}[subsection]{Definition}
\newtheorem{rmk}[subsection]{Remark}
\newtheorem{question}[subsection]{Question}
\newtheorem{conjecture}[subsection]{Conjecture}
\newtheorem{claim}[subsection]{Claim}
\newtheorem{hypothesis}[subsection]{Hypothesis}
\newtheorem*{conjecture*}{Conjecture}
\newtheorem*{theorem*}{Theorem}
\newtheorem*{claim*}{Claim}
\newtheorem*{corollary*}{Corollary}
\newtheorem*{notation*}{Notation}
\DeclareSymbolFont{bbold}{U}{bbold}{m}{n}
\DeclareSymbolFontAlphabet{\mathbbold}{bbold}
\def\SL{{\rm SL}}
\def\PSL{{\rm PSL}}
\def\Spec{{\rm Spec}}
\def\Lie{{\rm Lie}}
\def\pr{{\rm pr}}
\def\Sh{{\rm Sh}}
\def\K3{{\rm K3}}
\def\Fil{{\rm Fil}}
\def\Imag{{\rm Im}}
\def\GL{{\rm GL}}
\def\At{{\rm At}}
\def\Def{{\rm Def}}
\def\Fpbar{{\overline{\mb{F}}_p}}
\newcommand{\defeq}{\vcentcolon=}
\newcommand{\colim@}[2]{%
  \vtop{\m@th\ialign{##\cr
    \hfil$#1\operator@font colim$\hfil\cr
    \noalign{\nointerlineskip\kern1.5\ex@}#2\cr
    \noalign{\nointerlineskip\kern-\ex@}\cr}}%
}
\newcommand{\colim}{%
  \mathop{\mathpalette\colim@{}}\nmlimits@
}
\newcommand\nc{\newcommand}
\begin{document}

\title{Motivic local systems on curves and Maeda's conjecture}

\author{Yeuk Hay Joshua Lam}

\date{\today}

\begin{abstract}  
We show that only finitely many complex genus two curves and four punctured spheres admit  rank two local systems of geometric origin, and moreover each carries finitely many. This gives further counterexamples to a conjecture of Esnault and Kerz: counterexamples over very general curves were recently obtained by Landesman and Litt.  In the second part we prove an analogue of this result in positive characteristic, namely that over $\overline{\mb{F}}_p$, only finitely many genus two curves admit non-trivial rank two local systems pulled back from a fixed quaternionic Shimura variety, and the same for $\mb{P}^1$ minus four points; conjecturally,  every rank two local system arises as such a pullback. This provides results towards Maeda's conjecture on Galois orbits of eigenforms over function fields. The proofs make use of ideas from   the work of Landesman and Litt such as isomonodromy,  as well as crucially the  description of the Goren-Oort strata due to Tian and Xiao.
\end{abstract}

\maketitle 
\setcounter{tocdepth}{1}
\tableofcontents

\section{Introduction}
Let $X/\mb{C}$ be a smooth algebraic variety, and $V$ a local system on $X$. There has been substantial interest in understanding when $V$ is of \emph{geometric origin}. For $G$ an algebraic group, Esnault and Kerz \cite{esnaultkerz} conjectured that the set of  motivic local systems is dense in the character variety $\mathrm{Char}_G$ parametrizing $G$-local systems, and Budur and Wang \cite{budurwang} also made similar conjectures in this direction. Recently, in a beautiful work, Landesman and Litt showed that, when the rank of the local system is small compared to the genus,  \emph{very general curves}\footnote{we refer the reader to \cite[\S~1.2]{landesmanlitt} for the precise definition of very general curves} cannot admit any interesting \footnote{for example, one should impose that the local system has infinite monodromy} motivic local systems at all. There remained the possibility that certain  curves, e.g. those defined over $\overline{\mb{Q}}$, may admit a dense set of motivic local systems: afterall, Belyi's theorem almost gives such local systems by pulling back from $\mb{P}^1-\{0,1,\infty\}$. Our first result shows that, unfortunately,  this hope is also too good to be true.  

\begin{thm}\label{thm:genus2complex}\hfill
\begin{enumerate}
\item
Let $X/\mb{C}$ be a genus two curve, and let $\rm{Char}$ denote its $\SL_2$-character variety. Then only finitely many points of $\rm{Char}$ correspond to irreducible motivic local systems. Moreover, only finitely many genus two curves admit such motivic local systems.
\item The same statements hold for $X$ of the form $\mb{P}^1-\{0,1,t, \infty\}$ with $\rm{Char}$ being the $\SL_2$-character variety with unipotent monodromy at the punctures.
\end{enumerate}
\end{thm}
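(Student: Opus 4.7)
The plan is to combine a Hodge-theoretic reduction to classifying maps into quaternionic Shimura varieties, a de Franchis-type rigidity for such maps from a fixed curve, and a Landesman--Litt-style isomonodromy obstruction ruling out positive-dimensional families of source curves. Throughout I will interpret \emph{motivic} in the non-trivial sense of \emph{geometric origin with infinite monodromy}, since finite-monodromy irreducible local systems need to be handled separately or excluded by convention. The first step is that such a $V$, after passing to a finite \'etale cover of $X$ and twisting by a rank one local system, underlies a polarizable $\mathbb{Z}$-VHS of weight one with $h^{1,0}=h^{0,1}=1$; by the standard theory of Hilbert--Blumenthal-type variations of Hodge structure, this VHS has the form $R^1\pi_*\mathbb{Q} \otimes_{F,\iota} \mathbb{C}$ for an abelian scheme $\pi:\mathcal{A}\to X$ with real multiplication by a totally real field $F$ and an embedding $\iota:F\hookrightarrow\mathbb{C}$. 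Thus $V$ is pulled back along a non-constant classifying morphism $\phi_V:X\to M$ into a connected component $M$ of a quaternionic Shimura variety attached to a quaternion algebra $B/F$ split at $\iota$.

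For the finiteness of $V$ when $X$ is a fixed genus two curve, I would bound both the discrete Shimura datum $(F,B,K)$ and the map $\phi_V$. For each fixed $M$, rigidity and finiteness results for non-constant maps from a curve to a Shimura variety (de Franchis when $\dim M=1$, and hyperbolicity/arithmetic-rigidity arguments of Faltings--Vojta type when $\dim M\geq 2$) bound the number of such $\phi_V$. The discrete data $(F,B,K)$ are then bounded by noting that $\phi_V(X)\subset M$ is a positive-dimensional subvariety dominated by a genus two curve, and standard volume/discriminant inequalities for quaternionic Shimura varieties preclude this in all but finitely many Shimura data.

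For the finiteness of the curves $X$ themselves, I would use an isomonodromy argument. By Cattani--Deligne--Kaplan the locus $T\subset\mathcal{M}_2$ of genus two curves carrying some such motivic local system is a countable union of algebraic subvarieties. If a component $T_0\subset T$ were positive-dimensional, then over the universal curve $\mathcal{X}\to T_0$ I would isomonodromically deform $V$ along $T_0$ using the canonical flat connection on the relative $\SL_2$-character variety, producing a family $V_t$ with locally constant monodromy. Motivicity forces each $V_t$ to arise from an abelian scheme $\mathcal{A}_t\to X_t$, so the $\phi_{V_t}$ fit into an algebraic family into $M$; combined with the rigidity of the period map on $M$ and constancy of monodromy, the Landesman--Litt template (adapted to the genus two setting) should yield a contradiction. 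The $\mathbb{P}^1-\{0,1,t,\infty\}$ case runs in parallel, with $\mathcal{M}_2$ replaced by the $t$-line and the $\SL_2$-character variety by its unipotent-monodromy variant.

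The main obstacle is this last step: in genus two (and a fortiori for the four-punctured sphere) the rank-to-genus ratio falls outside the direct range of the Landesman--Litt bound, and the argument must exploit structure specific to the target. Here I expect the decisive input to be the Tian--Xiao description of the Goren--Oort stratification of quaternionic Shimura varieties that the paper uses crucially in its $\overline{\mathbb{F}}_p$ part: after reducing modulo a prime of good reduction, the image of the classifying map lands in an explicit union of $\mathbb{P}^1$-bundle strata whose rigidity should preclude positive-dimensional isomonodromic deformations, feeding back to the characteristic zero finiteness. Bounding the discriminants of $F$ and $B$ in the second step will likely require a separate height or volume argument leveraging the bounded genus of the source.
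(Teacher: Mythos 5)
There is a genuine gap, and it sits at the center of both finiteness claims. The paper's proof runs on a single elementary slope computation that your proposal never performs: the Hodge filtration $\mc{L}=\Fil^1\subset E$ of the VHS underlying $\mb{L}$ has positive degree $d$ by stability of the associated Higgs bundle (Griffiths/Simpson), while irreducibility forces the Higgs map $\mc{L}\to\mc{L}^{-1}\otimes\Omega^1_X$ to be non-zero, giving $0<d\le 2g-2-d=2-d$ and hence $d=1$ with the Higgs field an isomorphism. This is exactly the numerical coincidence special to $(g,n)=(2,0)$ and $(0,4)$ flagged in the paper's remarks. It yields both halves of the theorem at once: (i) $\mc{L}^{\otimes 2}\simeq\Omega^1_X$ is one of the sixteen theta characteristics, so the Higgs bundle, hence the local system (up to Galois conjugation), is pinned down to a finite list of uniformizing opers on the fixed curve $X$; and (ii) the family $\mc{A}\to X$ satisfies the Arakelov equality, so Viehweg--Zuo applies to show $X$ is an \'etale quotient of a Shimura curve, and Takeuchi's finiteness of arithmetic Fuchsian groups of bounded genus finishes the count of curves. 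Your substitutes do not close either step. For (i), de Franchis and ``volume/discriminant inequalities'' do not bound the discrete data $(F,B,K)$ from the mere existence of a non-constant map $X\to M$: without the Arakelov equality you cannot conclude that $\rho(\pi_1(X))$ is commensurable with a lattice in the target (it could a priori be thin), which is precisely what makes Takeuchi-type volume bounds bite; and de Franchis only controls maps to a \emph{fixed} hyperbolic target, not the collection of targets.

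For (ii), your isomonodromy-plus-reduction-mod-$p$ plan is not what the paper does in the complex case and, as stated, is not an argument: the claim that Goren--Oort rigidity in characteristic $p$ ``feeds back to the characteristic zero finiteness'' has no mechanism behind it (the Tian--Xiao input is used in the paper only for the separate $\overline{\mb{F}}_p$ theorem, where the induction runs entirely in characteristic $p$). You correctly sense that the rank-to-genus ratio puts this outside the direct Landesman--Litt range, but the paper's resolution is not a refined isomonodromy argument --- it is the observation that in this range every irreducible motivic $\SL_2$-local system is forced to be a uniformizing oper, after which classical results (Viehweg--Zuo, Takeuchi) take over. I would encourage you to redo the degree bookkeeping for $\Fil^1$ first; once you have $d=1$ and the Higgs isomorphism, both finiteness statements follow along lines much shorter than what you propose.
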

As an  amusing corollary,  we give examples of motivic local systems on curves $X/\mb{C}$ whose images under  all non-torsion elements of the mapping class group are \emph{not} motivic: see Corollary~\ref{cor:nonmotivic}. In other words, one does not obtain more motivic local systems by naively acting by the mapping class group.

\begin{rmk}
It seems likely that only finitely many irreducible $\SL_2$-local systems are motivic on any curve $X$\footnote{and for non-compact curves one should fix the conjugacy classes around the punctures}.  However, the finiteness as we vary the moduli of $X$ should not be expected, and indeed the types of curves we consider (namely genus two curves and four punctured spheres) are special  due to  a numerical condition they satisfy. Note that in the second part of the theorem the unipotency condition is  necessary  since otherwise we can simply restrict the Legendre family over $\mb{P}^1-\{0,1,\infty\}$ to any four punctured sphere. 
\end{rmk}
\begin{rmk}
 The finiteness of such local systems on $\mb{P}^1-\{0,1,t,\infty\}$ as $t$ varies was conjectured by Kontsevich\footnote{private communication}, based on a heuristic involving reduction modulo infinitely many primes and using the Langlands correspondence. We will comment on this more when we discuss our next result. 
\end{rmk}
\begin{rmk}
In general, some curves in $\mc{M}_g$ will admit more motivic local systems than others, and the loci of such may be interpreted as non-abelian analogues of Hodge loci, see for example the discussion in \cite[\S~8.2]{landesmanlitt}. One may view Theorem~\ref{thm:genus2complex} as a first computation of such loci.
\end{rmk}

The second part, and indeed most, of this paper is concerned with curves over finite fields. We now denote by $X$ a curve over a finite field $\mb{F}_q$.  In this case, it is  known that all local systems on $X/\mb{F}_q$ are motivic, and in rank two this was shown in the original work of Drinfeld. In any case, to each local system $\rho$, we have an associated trace field $F$, defined to be the smallest field containing all traces of Frobenii. The number field analogue of $F$ is the field of coefficients of modular forms; Maeda's conjecture as well as its subsequent extensions \footnote{we refer the reader to Section~\ref{section:maeda} for more on Maeda's conjecture and related works} say roughly that the field $F$ should be  as large as possible. The following is the  natural analogue in our setup:

\begin{question}
What is the distribution of the  fields $F$ as $X$ or $q$ vary? 
\end{question}

For example, we could imagine fixing $X/\mb{F}_q$ and basechanging it to $X'/\mb{F}_{q'}$ for some $q|q'$, or simply switching to another curve altogether. The first computations towards answering this question were done by Kontsevich \cite{maxim}, who considered the case of $\SL_2$, and $X=\mb{P}^1-\{0,1,t, \infty\}$. By explicit computation, he showed that, for most  values of $t$ and $q$, the fields $F$ are  indeed as large as possible. More precisely, the number of $\overline{\mb{Q}}_{\ell}$-local systems with unipotent monodromies at the four punctures is roughly $q$, and there is an $\mb{Z}/2\times \mb{Z}/2$-action on this set corresponding to Atkin-Lehner operators. Taking this $\mb{Z}/2\times \mb{Z}/2$-action into account, one sees numerically  that there are four Galois orbits, all of size roughly $q/4$ \footnote{Remarkably, there are also exceptions to this, where there can be five Galois orbits, for example}.

It seems possible then, at least in the case of rank two local systems, that once we bound the field $F$, the set of local systems with trace field $F$ is also finite as we vary $X$ and $q$\footnote{this should, however,   be expected to hold only generically when the genus and rank increase; indeed, there are both arithmetic and geometric obstructions to this: see Remark~\ref{rmk:highrank} for further discussion}. 

One way to produce rank two local systems on $X$ with trace field $F$ is to map $X$ to a \emph{quaternionic Shimura variety} and pullback the natural local systems on the latter. In the rank two case, the natural choice is the \emph{quaternionic Shimura} varieties, as quaternion algebras give groups which are \emph{forms} of $\GL_2$.  Heuristically, one can think of $\Sh$ as parametrizing abelian varieties with multiplication by a quaternion algebra; when  $F=\mb{Q}$ and $\infty \notin \rm{S}$, $\Sh$ is either a modular curve or a Shimura curve. Denote by $\mscr{S}_k$ the special fiber of an integral model of such a Shimura variety; the upshot is that maps $X\rightarrow \mscr{S}_k$ give rise to local systems with trace field $F$. For rank two local systems on affine curves with suitable non-trivial monodromy at infinity, it is known that they are pulled back from Shimura varieties. For proper curves and  $F=\mb{Q}$, it is a conjecture of Krishnamoorthy \cite[Conjecture 1.2]{krishna} that such local systems arise in this way, and it seems reasonable to conjecture this is the case in general. We have finally set the scene for our second result. For brevity, we say a smooth curve $X$ is of type $(g,n)$ if it is of the form $X=\bar{X}-\{p_1, p_2, \cdots , p_n\}$, with $\bar{X}$ smooth, proper, and  of genus $g$.
\begin{thm}\label{thm:mainmodp}
Let $B/F$ be a quaternion algebra satisfying Hypothesis \ref{hypothesis}. For each   quaternionic Shimura variety $\Sh$ attached to $B/F$, with special fiber $\mscr{S}_k$,  
\begin{itemize} 
\item only finitely many genus two curves $X/\overline{\mb{F}}_p$ admit non-trivial local systems pulled back from $\mscr{S}_k$;
\item only finitely many curves of type $(0,4)$ admit non-trivial local systems with unipotent monodromy at the boundary, which are  pulled back from $\mscr{S}_k$.
\end{itemize}
\end{thm}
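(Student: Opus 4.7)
The plan is to rephrase the problem as one about morphisms $\varphi: X \to \mscr{S}_k$ and then combine the Tian--Xiao description of the Goren--Oort stratification with the small Euler characteristic $\chi(X) = -2$ shared by both curve types. Under Hypothesis~\ref{hypothesis}, any non-trivial rank-two local system on $X$ pulled back from $\mscr{S}_k$ should arise---up to twist and Hecke correspondence---from an essentially unique non-constant morphism $\varphi: X \to \mscr{S}_k$, so that the task reduces to bounding the set of such pairs $(X,\varphi)$, with $X$ of type $(2,0)$ or $(0,4)$.

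For each embedding $\sigma: F \hookrightarrow \overline{\mb{F}}_p$, I would work with the partial Hodge line bundle $\omega_\sigma$ on $\mscr{S}_k$ and the partial Hasse invariant $h_\sigma \in H^0(\mscr{S}_k, \omega_\sigma^{\otimes(p-1)})$, whose refined vanishing loci are the Goren--Oort strata $\mscr{S}_k^T$ for $T \subset \Sigma$. By Tian--Xiao, each $\mscr{S}_k^T$ is an iterated $\mb{P}^1$-bundle over a smaller quaternionic Shimura variety obtained from $B$ by swapping local invariants at certain $p$-adic places. Pulling back via $\varphi$ yields line bundles $L_\sigma = \varphi^* \omega_\sigma$ on $X$ together with sections $s_\sigma \in H^0(X, L_\sigma^{\otimes(p-1)})$; the stratum structure imposes explicit linear relations among $\deg L_\sigma$ according to which strata $\varphi(X)$ meets. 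Finiteness of maps $\varphi$ for a fixed $X$ should follow from these relations combined with the smallness of $\chi(X)$ via Riemann--Hurwitz applied after composing with the natural projections of a Goren--Oort stratum onto its $\mb{P}^1$-factors.

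For finiteness as $X$ varies, I would run an isomonodromy-style argument in the spirit of Landesman--Litt: supposing there were infinitely many pairs $(X_i, \varphi_i)$, they would sweep out a positive-dimensional family $\mc{X} \to B$ of curves together with a $B$-morphism $\mc{X} \to \mscr{S}_k$. The Kodaira--Spencer class of the pulled-back quaternionic abelian scheme, constrained by the Tian--Xiao geometry and by $\chi(\mc{X}_b) = -2$, should force either $\varphi$ to be constant along the fibers or the family $\mc{X} \to B$ itself to be isotrivial---each contradicting the setup. The main obstacle, I expect, is the case where $\varphi(X)$ is contained entirely in the ordinary locus, where the vanishing-locus bookkeeping of the $s_\sigma$ is vacuous and rigidity must instead be extracted from the positivity of $\prod_\sigma \omega_\sigma$ on $\mscr{S}_k$. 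It is precisely here that the numerical coincidence $\chi(X) = -2$ does the heavy lifting, in agreement with the remark following Theorem~\ref{thm:genus2complex} that analogous finiteness should fail for curves of higher genus or with more punctures.
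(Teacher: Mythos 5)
Your overall architecture matches the paper's: reduce to morphisms $\varphi\colon X\to\mscr{S}_k$ of bounded degree, induct on $\dim\mscr{S}_k$ using the Tian--Xiao description of the Goren--Oort strata as $(\mb{P}^1)^N$-bundles over smaller quaternionic Shimura varieties (the local systems being pulled back along the projection), and handle the generically ordinary case by an isomonodromy rigidity argument in the style of Landesman--Litt, with $\chi(X)=-2$ as the decisive numerical input. You also correctly locate the hard case in the ordinary locus. However, the two load-bearing steps are left as expectations rather than arguments, and as written neither would go through.

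First, the degree bound. Your ``linear relations among $\deg L_\sigma$ plus Riemann--Hurwitz'' has no anchor: the relations coming from the partial Hasse invariants (which, in the Tian--Xiao normalization, are maps $h_\tau\colon\omega_\tau\to(\omega_{\sigma^{-n_\tau}\tau})^{\otimes p^{n_\tau}}$, not sections of $\omega_\sigma^{\otimes(p-1)}$) only give inequalities $\deg f^*\omega_{\tau}\le p^{n_\tau}\deg f^*\omega_{\sigma^{-n_\tau}\tau}$ chaining the degrees together; they bound nothing until one degree is pinned down. The paper pins it down by the same Arakelov computation as in the complex genus-two case: some Kodaira--Spencer map $f^*\omega_{\tau_0}\to f^*(H^{dR}_{1,\tau_0}/\omega_{\tau_0})\otimes\Omega^1_{\bar X}(Z)$ is non-zero (after reducing to $f$ generically separable), forcing $0<\deg f^*\omega_{\tau_0}\le 2-\deg f^*\omega_{\tau_0}$, i.e.\ $\deg f^*\omega_{\tau_0}=1$; inertness of $p$ then propagates the bound to every $\tau$ and hence to the polarization degree. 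Second, the rigidity. ``The Kodaira--Spencer class \ldots should force either $\varphi$ constant along fibers or the family isotrivial'' is the conclusion, not a mechanism. The actual argument is: the pulled-back flat bundles $f^*H^{dR}_{1,\tau}$ vary isomonodromically in the family (this itself needs a crystalline ``family of families'' argument), and the Hodge filtration $\Fil^1=f^*\omega_\tau$ deforms along with them, so the isomonodromy class $q^{\nabla}_*(\tilde s)\in H^1(X,\At(E))$ dies in $H^1(X,\mc{H}om(\Fil^1,E/\Fil^1))$. But $\deg\Fil^1=1$ and $\deg\det E=0$ give $\deg\mc{H}om(\Fil^1,E/\Fil^1)=-2=\deg T_X$, so the non-zero map $T_X\to\mc{H}om(\Fil^1,E/\Fil^1)$ is an isomorphism of line bundles and induces an isomorphism on $H^1$, contradicting $\tilde s\ne 0$. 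Without this Atiyah-bundle computation (and the preceding $\deg=1$ statement that feeds it) the proof does not close.
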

\begin{rmk}
Combined with \cite{unifdeligne}, we obtain the Maeda type statement that for any $D$, as we range over all curves of type $(2,0)$ and $(0,4)$ over $\Fpbar$, there are only finitely many rank two local systems with trace field   of degree $<D$. On the other hand, the analogous statement is most likely not true for larger genus or larger rank, as there can be families of curves supporting rank two local systems.
\end{rmk}
 Hypothesis~\ref{hypothesis} is primarily the technical assumption that  $F$ is inert at $p$, which ultimately stems from the lack of a moduli interpretation of quaternionic Shimura varieties, and we certainly expect the theorem to hold without this assumption; see Remark~\ref{rmk:hypothesis} for more on this.
We now give a sketch of the proofs of Theorems~\ref{thm:genus2complex} and ~\ref{thm:mainmodp}. It should be noted that our methods are very much inspired by the work of Landesman and Litt on stability of vector bundles and isomonodromy.

First we discuss the proof of  Theorem~\ref{thm:genus2complex}, and we focus on the case of genus two curves for simplicity. A rank two local system of geometric origin unerlies a polarizable complex variation of Hodge structure, which gives a vector bundle with connection $(E, \nabla)$ on $X$, which is furthermore equipped with a Hodge filtration $\mc{L}\defeq \Fil^1$ which must be a line subbundle in our case. A result of Griffiths implies that $\mc{L}$ is positive, and a degree computation shows that the Higgs map is an isomorphism. This pins down the Higgs bundle, of which there are only finitely many choices, and therefore the local system by the Simpson correspondence. In fact, such a $(E, \nabla)$ whose associated Higgs field is an isomorphism is known as an \emph{uniformizing oper}. For the finiteness statements, a rank two local system  $\mb{L}$ coming from geometry must arise from an abelian scheme, and the above shows that it further satisfies the \emph{Arakelov equality}. By a result of Viehweg and Zuo \cite{viehwegzuo}, this implies that $X$ has an \'etale cover which is a Shimura curve, and the finiteness follows from that of arithmetic Fuchsian groups of bounded genus, which is due to Takeuchi \cite{takeuchi}.

Next, we sketch the proof of Theorem~\ref{thm:mainmodp}; again for this sketch we focus on the case of genus two curves. We consider the  stack $\mc{M}_2(\mscr{S}_k,d)$ of genus two curves mapping to $\mscr{S}_k$ of degree $d$, which is of finite type. The argument proceeds by induction on $\dim \mscr{S}_k$, and we first focus on the substack $\mc{M}_2(\mscr{S}_k,d)^{\rm{os}}$ when the curve is \emph{ordinary}. Remmembering just the data of the curve, we have a map 
\[
\pi: \mc{M}_2(\mscr{S}_k,d)^{\mathrm{ord}} \rightarrow \mc{M}_2.
\]
We  show that the image of $\pi$ is a finite set, so let us suppose that this is not the case.

This means that there is a positive dimensional, non-trivial  family of curves $\mf{X}\rightarrow S$ over some base $S$ as well as  a family of maps $\mf{X}\rightarrow \mscr{S}\otimes S$. The Shimura variety $\mscr{S}_k$ comes equipped with several rank two, filtered, vector bundles with connection $(E_{\nabla}, \nabla_{\tau}, \Fil_{\tau})$, so pulling back we obtain the same  on $X$, which we denote simply  by $f^*E_{\nabla}$. The $f^*E_{\nabla}$'s are isomonodromic, and the ordinary assumption lets us  put ourselves in the same setup as in the proof of Theorem~\ref{thm:genus2complex}: namely,  $\Fil_{\tau}$ is positive, and the Kodaira-Spencer map is non-zero. The argument then concludes with a deformation theory computation.

Now we suppose $X$ is not generically ordinary; this means that $X$ lies in a Goren-Oort stratum $\mscr{S}_{k, \rm{T}}\subset \mscr{S}_k$. By the main result of \cite{tianxiao}, $\mscr{S}_{k,\rm{T}}$ is a $(\mb{P}^1)^N$-bundle over another quaternionic Shimura variety $\mscr{S}(G(\mathrm{S(T)}))_k$. Moreover, the natural local systems on $\mscr{S}_{k, \rm{T}}$ agree with those pulled back from $\mscr{S}(G(\mathrm{S(T)}))_k$. Since the latter has smaller dimension,  we now conclude by our inductive hypothesis.
\begin{rmk}
Along the way, we showed that sufficiently non-trivial maps from a genus 2 curve to a fixed quaternionic Shimura variety must have  bounded degree. In characteristic zero, a much more general result was obtained by Faltings \cite{faltings}: namely, the same boundedness holds for abelian varieties over punctured curves. It would be interesting to investigate analogous boundedness statements in positive characteristics.
\end{rmk}
\subsection*{Acknowledgements}
I am grateful to Maxim Kontsevich for communicating his conjectures on the finiteness statements in the case of complex curves, and to Sasha Petrov for many  enlightening discussions. Parts of this work were carried out at IHES, Bures-sur-Yvette and CIRM, Luminy, and I thank these institutions for their hospitality. It should be hopefully clear the intellectual debt we owe to the works of Landesman and Litt, and Tian and Xiao: we thank these authors for their inspiring works. 
\begin{notation*}
Throughout we write $k$ for $\overline{\mb{F}}_p$, and $W(k)$ for the Witt vectors of $k$. We write $\Sh$ for a Shimura variety in characteristic zero, $\mscr{S}$ for an integral model of such a Shimura variety (typically over $W(k)$), and $\mscr{S}_k$ for its special fiber.
\end{notation*}

\section{Complex  curves}
We first recall a positivity result specialized to our setting.
\begin{prop}\label{prop:stablehiggs}
Let $X$ be a smooth projective curve, and $Z\subset X$ a reduced divisor. Let  $(E, F, \nabla)$ be a rank two flat vector bundle equipped with its Hodge filtration corresponding to a polarizable complex variation of Hodge structure on $X\backslash Z$. Denote by $\overline{E}, \overline{\nabla}$ its Deligne extension to $X$, and let $(\oplus_i \mathrm{gr}_F^i \overline{E}, \theta)$ denote the associated Higgs bundle. Then the latter is  stable of parabolic degree zero. In particular, the line bundles $F^0$ and  $F^0/F^1$ have positive and  negative parabolic degrees, respectively.
\end{prop}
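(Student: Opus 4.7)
The plan is to deduce the proposition from the parabolic version of Simpson's correspondence, combined with the off-diagonal nature of the Higgs field in a variation of Hodge structure.

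First I would set up the parabolic Higgs bundle. By Schmid's nilpotent orbit theorem, the Hodge filtration extends across $Z$ to a filtration on the Deligne extension $\overline{E}$, and the residues of $\overline{\nabla}$ at each component of $Z$ determine canonical parabolic weights on $\overline{E}$ and hence on each $\mathrm{gr}_F^i \overline{E}$. This upgrades $(\oplus_i \mathrm{gr}_F^i \overline{E}, \theta)$ to a parabolic Higgs bundle on $(X, Z)$.

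The key input is the parabolic/tame form of Simpson's theorem (Simpson, Mochizuki): the parabolic Higgs bundle associated to a polarizable complex variation of Hodge structure is polystable of parabolic degree zero. In rank two, polystability upgrades to stability as soon as $\theta \ne 0$: a non-trivial polystable-but-not-stable decomposition would split the Higgs bundle as a direct sum of parabolic line bundles of degree zero, each necessarily Hodge-graded; but $\theta$ strictly decreases the Hodge grading, so it would have to vanish on each summand and hence identically, contradicting non-triviality of the Higgs field (equivalently, irreducibility of the underlying local system).

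Given stability, the positivity assertion is formal. The lower Hodge piece $F^0/F^1$ is annihilated by $\theta$ inside $F^1 \oplus F^0/F^1$, so it is a $\theta$-invariant parabolic sub-line bundle of the Higgs bundle. By stability its parabolic degree is strictly less than the slope of the total bundle, namely $0$; since the two parabolic degrees sum to zero, the sub-line bundle $F^1 \subset \overline{E}$ has strictly positive parabolic degree.

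The main obstacle is the careful bookkeeping of parabolic structures at $Z$: one must apply the non-proper parabolic form of Simpson's correspondence (due to Mochizuki) rather than the compact case of \cite{simpson}, and verify that the parabolic weights on each $\mathrm{gr}_F^i \overline{E}$ are correctly read off from the residues of $\overline{\nabla}$ together with the extension of the Hodge filtration across $Z$. Once these are fixed, the argument is essentially formal.
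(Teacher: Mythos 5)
Your proposal is correct and follows essentially the same route as the paper, whose proof is simply a citation of Simpson's theorem on harmonic bundles over noncompact curves (note that the cited \cite{simpson} already \emph{is} the tame/parabolic correspondence for punctured curves, so the appeal to Mochizuki's non-proper generalization is unnecessary in dimension one). The extra steps you supply --- upgrading polystability to stability using the non-vanishing of $\theta$ (equivalently irreducibility, a hypothesis the proposition tacitly assumes and which is present in every application), and deducing the sign of the parabolic degrees of $F^1$ and $F^0/F^1$ from stability applied to the $\theta$-invariant graded quotient --- are exactly what the paper leaves implicit in its reference to \cite[Proposition 4.1.4]{landesmanlitt}.
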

\begin{proof}
See \cite[Theorem 5]{simpson}, as well as \cite[Proposition 4.1.4]{landesmanlitt} for a nice summary.
\end{proof}

The following is the straightforward part of the Simpson correspondence; see for example \cite[Theorem 1]{simpsonubi}.
\begin{prop}
The Higgs bundle $(\oplus_i \mathrm{gr}^i_F\overline{E}, \theta)$ determines the complex variation of Hodge structure.
\end{prop}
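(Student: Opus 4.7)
The plan is to invoke the non-abelian Hodge correspondence together with its $\mb{C}^*$-equivariance in the tame parabolic setting. By Proposition~\ref{prop:stablehiggs}, the Higgs bundle $(\bigoplus_i \mathrm{gr}^i_F \overline{E}, \theta)$ is parabolically polystable of parabolic degree zero, and Simpson's correspondence attaches to it a unique polystable parabolic flat bundle on $X$ with logarithmic singularities along $Z$. Since forming the associated graded is one direction of this correspondence, the flat bundle so recovered must be canonically isomorphic to $(\overline{E}, \overline{\nabla})$; hence the underlying flat connection on $X\backslash Z$ is determined by the Higgs bundle alone.

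To recover the Hodge filtration $F$, I would exploit the natural $\mb{C}^*$-action on the Higgs bundle that scales the $i$-th summand $\mathrm{gr}^i_F \overline{E}$ by $\lambda^i$; under this action the Higgs field $\theta$ has weight one, so the whole graded Higgs bundle is a $\mb{C}^*$-fixed point on the Higgs moduli space. Simpson identifies $\mb{C}^*$-fixed points (polystable, parabolic degree zero) with complex variations of Hodge structure, the weight decomposition of the transported $\mb{C}^*$-action on the de Rham side yielding exactly the Hodge decomposition, from which $F$ is read off. Thus both pieces of data, $(E,\nabla)$ and $F$, are recovered from $(\bigoplus_i \mathrm{gr}^i_F \overline{E}, \theta)$.

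The main technical burden in fleshing this out is the parabolic bookkeeping along $Z$: one must verify that the parabolic weights on the Higgs side match the Deligne-extension data on the flat side, and that the $\mb{C}^*$-scaling is compatible with the harmonic metric so that fixed points on the Higgs moduli really do produce CVHSs with the expected Hodge data. Both compatibilities are established by Simpson in the tame setting, so in practice the proof reduces to citing \cite[Theorem 1]{simpsonubi}.
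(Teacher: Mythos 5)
Your argument is correct and is essentially the paper's own approach: the paper simply cites Simpson's correspondence (\cite[Theorem 1]{simpsonubi}), which is exactly what your proposal reduces to after spelling out the $\mb{C}^*$-fixed-point characterization of variations of Hodge structure and the parabolic bookkeeping. The extra detail you supply is accurate but not something the paper itself records.
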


\begin{proof}[Proof of Theorem~\ref{thm:genus2complex}]
We first treat the case with $X$ being a genus two curve. Suppose we have a rank two vector bundle with flat connection $(E, \nabla)$ on $X$ corresponding to a  $\SL_2$-local system $\mb{L}$, which is moreover irreducible and motivic. We denote by $\rho$ the associated representation of $\pi_1(X)$, which  we may further assume has infinite monodromy. Note that $\det E$ is the trivial line bundle.

As $\mb{L}$ is motivic, it corresponds to a a representation 
\[
\pi_1(X)\rightarrow \SL_2(K)\xhookrightarrow{} \SL_2(\mb{C})
\]
for some number field $K$ along with a complex embedding. Now each of the Galois conjugates $\mb{L}^{\sigma}$ underlies a VHS, and since it has infinite monodromy, one of these VHS has a non-trivial Hodge filtration, and we assume that it is $\mb{L}$ itself. Therefore $(E, \nabla)$  has a sub line bundle $\mc{L} \defeq \Fil^1 \subset E$ of some  degree $d$. Note that by the $\SL_2$-condition, $E/\mc{L} \simeq \mc{L}^{-1}$. By Proposition~\ref{prop:stablehiggs}, $d>0$; on the other hand, since the local system is irreducible, we have a non-trivial Higgs map
\[
\mc{L} \rightarrow \mc{L}^{-1}\otimes \Omega^1_X,
\]
with $\Omega^1_X$ being the sheaf of one-forms.
Therefore 
\[
0< d\leq 2g-2-d=2-d,
\]
which forces $d=1$. Furthermore, since its  source and target have  the same degree, the Higgs map is an isomorphism. It follows that 
\[
\mc{L}^{\otimes 2} \simeq \Omega^1_X.
\]
There are sixteen such choices of $\mc{L}$, and the Higgs map is also determined once we have fixed $\mc{L}$; therefore the choice of $\mc{L}$ determines the Higgs bundle, and therefore the local system also. Finally, each rank two motivic local system is a Galois conjugate of one of these, and hence there are finitely many such, as required.

The argument for  $X=\mb{P}^1-\{0,1,t, \infty\}$ is almost identical. The Higgs bundle simply gets replaced by a meromorphic Higgs bundle
\[
\mc{L}\rightarrow \mc{L}^{-1}\otimes \Omega_X^1(D),
\]
with poles along the divisor $D=(0)+(1)+(t)+(\infty)$, and the degree is replaced by parabolic degree.

Now we show that there are only  finitely many genus two curves admitting such a rank two local system $\mb{L}$. Since we are assuming $\mb{L}$ is motivic, it must arise as a direct factor of a family of abelian varieties $p: \mc{A}\rightarrow X$; moreover, we may assume that each of the  direct factors of $R^1p_*\mb{C}$   is isomorphic to a Galois conjugate of $\mb{L}$. By the above argument, we therefore see that $\mc{A}$ satisfies the \emph{Arakelov equality} as in \cite{viehwegzuo}, and hence $X$ has an \'etale cover which is a Shimura curve by \cite[Theorem 0.7]{viehwegzuo}. Therefore the Fuchsian group $\Gamma =\rho(\pi_1(X))\subset \PSL_2(\mb{R})$ is arithmetic. Finally, by a theorem of Takeuchi \cite[Theorem~2.1]{takeuchi}, there are only finitely many conjugacy classes of arithmetic Fuchsian groups of bounded genus. 
The argument for finiteness in the case of $\mb{P}^1-\{0, 1, t, \infty\}$ is identical.
\end{proof}

We give examples of local systems of geometric origin whose image under any non-trivial element of the mapping class group is not of geometric origin. Let $\mathrm{Mod}_g$ denote the mapping class group of a closed genus $g$ Riemann surface. Recall that  we have 
\[
\mathrm{Mod}_g \simeq \mathrm{Out}(\pi_1(\Sigma_g)),
\]
the outer automorphism group of the surface group $\pi_1(\Sigma_g)$. We therefore have an action of $\mathrm{Mod}_g$ on any character variety of $\Sigma_g$: for any $\sigma \in \mathrm{Mod}_g$ and $\rho \in \mathrm{Char}$, we let $\sigma^*\rho$ denote the result obtained by acting with $\sigma$.
\begin{cor}\label{cor:nonmotivic}
Let $\rho$ be a  motivic irreducible $\SL_2$-local system on a curve of genus two. Then for any non-torsion element  $\sigma\in \mathrm{Mod}_2$, $\sigma^*\rho$ is not motivic. 
\end{cor}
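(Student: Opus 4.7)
The plan is to argue by contradiction, using the finiteness statement from Theorem~\ref{thm:genus2complex} together with the rigidity of motivic local systems. Suppose that $\sigma \in \mathrm{Mod}_2$ is non-torsion and that $\sigma^*\rho$ is also motivic.

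First I would record what Theorem~\ref{thm:genus2complex}(1) gives: the set $\mathrm{Mot}_X$ of irreducible motivic $\SL_2$-local systems on $X$ is finite, and the proof of that theorem identifies its elements (up to Galois conjugation) with the $2^{2g}=16$ spin structures $\mathcal{L}$ on $X$ satisfying $\mathcal{L}^{\otimes 2} \cong \Omega_X^1$. In particular, each $\rho \in \mathrm{Mot}_X$ is rigid---isolated in the character variety---so its stabilizer $\mathrm{Stab}_{\mathrm{Mod}_2}(\rho)$ is finite, and hence consists of torsion elements.

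Next I would consider the iterates $\rho_n := (\sigma^n)^*\rho$. The key claim is that each $\rho_n$ lies in the finite set $\mathrm{Mot}_X$. Granting this, the pigeonhole principle forces $\rho_n = \rho_m$ for some $0 \leq n < m$, whence $\sigma^{m-n} \in \mathrm{Stab}_{\mathrm{Mod}_2}(\rho)$ is torsion, so $\sigma$ itself is torsion, contradicting the hypothesis.

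The main obstacle, and the heart of the argument, is justifying that the entire orbit $\{(\sigma^n)^*\rho\}_{n \geq 0}$ stays inside the motivic locus. The motivic locus in the character variety is not a priori preserved by the mapping class group action, since motivicity is an algebro-geometric property attached to $X$ rather than a topological property of $\pi_1(\Sigma_g)$. To overcome this, I would exploit the precise Hodge- and Shimura-theoretic structure of $\mathrm{Mot}_X$: by the proof of Theorem~\ref{thm:genus2complex}, each such local system underlies a polarized complex variation of Hodge structure whose Higgs bundle has the rigid form $\mathcal{L} \oplus \mathcal{L}^{-1}$ with $\mathcal{L}^{\otimes 2} \cong \Omega_X^1$, and $X$ itself admits an \'etale cover which is a Shimura curve. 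A careful analysis of the compatibility between the Simpson correspondence and the $\mathrm{Mod}_2$-action on $\mathrm{Char}$---together with the observation that motivic local systems on $X$ are all pulled back from this Shimura cover, following Viehweg--Zuo---should show that $\sigma^*$ must preserve $\mathrm{Mot}_X$ once it is known to send a single motivic local system to a motivic one, which is exactly what is needed to run the iteration.
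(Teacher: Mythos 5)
Your proposal has two genuine gaps. First, the claim that each motivic $\rho$ is ``rigid---isolated in the character variety'' is false: the $\SL_2$-character variety of a closed genus two surface is positive-dimensional (of dimension $6g-6=6$), and irreducible representations are smooth points of it, so no irreducible $\rho$ is isolated. Theorem~\ref{thm:genus2complex} says that the \emph{motivic} locus is a finite subset of $\mathrm{Char}$, not that its points are isolated in $\mathrm{Char}$; and even isolation would not give finiteness of $\mathrm{Stab}_{\mathrm{Mod}_2}(\rho)$, since a group can perfectly well fix an isolated point. Second---and this is the step you yourself flag as the heart of the matter---your pigeonhole argument needs the entire orbit $(\sigma^n)^*\rho$ to remain motivic, whereas the hypothesis of the contradiction only gives motivicity of $\sigma^*\rho$; nothing in your sketch produces motivicity of $(\sigma^2)^*\rho$, and the proposed mechanism (that $\sigma^*$ preserves the motivic locus once it sends a single motivic local system to a motivic one) is asserted rather than proved. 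The corollary itself says that $\sigma^*$ generically destroys motivicity, so this is exactly the wrong thing to expect to hold.

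The paper's proof avoids both issues by extracting a sharper statement from the proof of Theorem~\ref{thm:genus2complex}: every irreducible motivic $\SL_2$-local system on $X$ is a uniformizing oper, hence its image $\overline{\rho}$ in the $\PSL_2$-character variety is the \emph{unique} uniformizing representation of $X$, i.e.\ the point of the Teichm\"uller component corresponding to $X$. If $\sigma^*\rho$ were also motivic, it would project to that same point, so $\sigma$ would fix $\overline{\rho}$ in Teichm\"uller space; but $\mathrm{Mod}_2$ acts properly discontinuously on Teichm\"uller space with finite point stabilizers, so only torsion elements have fixed points there. If you want to salvage your strategy, replace ``finite set of motivic local systems'' by ``a single point of Teichm\"uller space'' and ``rigidity'' by ``proper discontinuity of the $\mathrm{Mod}_2$-action on Teichm\"uller space''; then no iteration or pigeonhole is needed.
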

\begin{proof}
By the proof of Theorem~\ref{thm:genus2complex}, such a $\rho$ must correspond to an uniformizing representation for $X$. Now suppose $\sigma^*\rho$ is motivic. Then $\rho$ and $\sigma^*\rho$ must map to the same point in the $\mathrm{PSL}_2$-character variety: in other words, $\sigma$ fixes $\overline{\rho}$ in the $\mathrm{PSL}_2$-character variety. On the other hand, $\overline{\rho}$ lies in the Teichm\"uller component, and no element $\sigma \in \mathrm{Mod}_2$ of infinite order acts with  fixed points. We therefore have a contradiction, and so $\sigma^*\rho$ is not motivic. 
\end{proof}
\begin{rmk}
In fact,  whether the action of $\mathrm{Mod}_g$ preserves motivicity was one of the initial questions which led to this work: the author had  naively hoped that a positive answer was possible and would give a way to produce many local systems of geometric origin.
\end{rmk}
\begin{rmk}
Note that Shimura curves of genus two do exist, thus giving motivic local systems not invariant under most elements of the mapping class group. For example, we can take $F=\mb{Q}$, $\rm{S}=2, 13$, for further examples, we refer the reader to  \cite[Table 4.1]{voight}.
\end{rmk}
\section{Maeda's conjecture}\label{section:maeda}
\subsection{Maeda's conjecture over number fields}
We first recall the statement of Maeda's conjecture. For $k\geq 2$, let $S_k(\SL_2(\mb{Z}))$ denote the $\mb{Q}$-vector space of cuspidal modular forms of weight $k$ and full level with rational coefficients. 
\begin{conjecture}[{\cite[Conjecture 1.2]{hidamaeda}}]
The Hecke algebra over $\mb{Q}$ of $S_k(\SL_2(\mb{Z}))$ is simple, i.e. it is a number field $F$. Moreover the Galois closure of $F$ has Galois group the symmetric group $S_n$ with $n=\dim S_k(\SL_2(\mb{Z})).$
\end{conjecture}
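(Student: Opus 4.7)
The plan is to address the two clauses of the conjecture separately, acknowledging up front that Maeda's conjecture is a famous open problem with no currently known route to an unconditional proof. The function-field theorems of this paper (in particular Theorem~\ref{thm:mainmodp}) yield only an analogue over $\Fpbar$, not over $\mb{Q}$, so what follows is a proposal of strategies rather than a viable proof; each runs into the same kind of arithmetic obstruction in the end.

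For the simplicity of the Hecke algebra $\mb{T}_k$, the natural approach is by contradiction. Suppose the normalized eigenforms split into two $\mathrm{Gal}(\overline{\mb{Q}}/\mb{Q})$-orbits of sizes $n_1, n_2$ with $n_1+n_2=n$. Then the characteristic polynomial of every Hecke operator $T_p$ acting on $S_k(\SL_2(\mb{Z}))$ factors over $\mb{Q}$ as a product of two monic integer polynomials of degrees $n_1$ and $n_2$, with roots bounded by $2p^{(k-1)/2}$. The first step would be to play this rigid factorization pattern against congruences coming from the $\ell$-adic Galois representations attached to each putative orbit, against Sato--Tate equidistribution of Hecke eigenvalues, and against the residual modular representations given by Serre's conjecture. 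The difficulty, and the reason this conjecture is open, is that any single obstruction of this form is consistent with any prescribed splitting; there is no mechanism at present to aggregate them into a contradiction.

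For the $S_n$ clause, one assumes simplicity, lets $L$ be the Galois closure of the Hecke field $F$, and uses the classical fact that a transitive subgroup of $S_n$ containing a transposition and a $p$-cycle with $n/2<p<n-2$ is all of $S_n$. Transitivity is automatic from simplicity, so one needs to produce (i) a transposition and (ii) a prime-order cycle in $\mathrm{Gal}(L/\mb{Q})$. Cycle types of Frobenius at a rational prime $q$ are read off from the factorization of the characteristic polynomial of $T_q$ modulo $q$, so the task reduces to exhibiting infinitely many primes with prescribed factorization patterns for this polynomial. I would attempt a Chebotarev-style argument, perhaps bootstrapping from the existence of one prime with a given pattern via the theory of mod $\ell$ modular representations, but the $q$-adic properties of the $a_q(f)$ are controlled only through Deligne's bounds, which are far too weak.

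The main obstacle, and in my view the reason this problem is hard rather than merely open, is the absence of an archimedean analogue of the ingredients that drive Theorem~\ref{thm:mainmodp}: the Tian--Xiao description of Goren--Oort strata and the Hodge-theoretic positivity/isomonodromy arguments used in Theorem~\ref{thm:genus2complex}. These tools provide a genuine inductive handle on local systems of bounded trace field over $\Fpbar$, but neither has a counterpart that bears on a single fixed Hecke algebra over $\mb{Q}$. Accordingly, my proposal is to view Theorem~\ref{thm:mainmodp} as strong function-field evidence for Maeda's conjecture, while leaving the conjecture itself as a target for future work.
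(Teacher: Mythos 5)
This statement is Maeda's conjecture, which the paper merely recalls from \cite{hidamaeda} as an open conjecture and does not prove (no proof exists in the literature); your proposal correctly recognizes this and, like the paper, leaves it unproven. Your assessment of the obstructions and of Theorem~\ref{thm:mainmodp} as function-field evidence is reasonable, and there is nothing in the paper's treatment for your proposal to diverge from.
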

\subsection{Previous works}
We refer the reader to \cite[\S~1]{maeda} for some interesting history of the conjecture, as well as other conjectures about the (hypothetical) field $F$ such as the behavior of its discriminant, and the various consequences of the conjecture. The conjecture has been numerically verified by Ghitza and McAndrew up to weight around 14000: see \cite{ghitza} and Table 1 therein for previous works on Maeda's conjecture. For modular forms with non-trivial level, Tsaknias \cite{tsaknias} has performed computations and made conjectures in this direction: see also the recent work of Dieulefait, Pacetti and Tsaknias on this \cite{dieulefait}, as well as that of Murty and Srinivas \cite{murtysrinivas}.

We also mention the work of Serre \cite{serre} in the number field setting, which treats questions similar to those investigated here, as the weight is fixed in loc.cit.\footnote{there is not really an analogue of the weight of a modular form in the function field setting}. For example, a particularly simple to state result of Serre's is that the largest degree of the trace fields appearing in $S_k(\Gamma_0(N))$ goes to infinity as $N\rightarrow \infty$, with $k$ being fixed. We hope to investigate these questions in our setup in future work.

\subsection{Function field analogue}
The rest of this paper is devoted to proving results towards a version of Maeda's conjecture over function fields. We do not make a precise statement of such a conjecture and merely make some preliminary remarks here.

\begin{rmk}\label{rmk:highrank}
Note that, Maeda's conjecture in the function field case can be., at best, generically true. Indeed, if $\mc{O}$ is the ring of integers of some number field, and $\mf{X}$ is a curve over $\mc{O}$ equipped with a motivic local system, then upon reduction modulo primes $\mf{p}$ the traces of Frobenius will lie in some fixed number field, and such trace fields will not be as big as Maeda's conjecture predicts. There are also purely characteristic $p$ obstructions to Maeda's conjecture. For example, for each $g$, there exists $N(g)$ such that a Zariski dense set of    genus $g$ curves over  $\overline{\mb{F}}_p$ admit  irreducible, arithmetic,  $\mb{Q}_{\ell}$-local system with trace field $\mb{Q}$. This follows from the existence of non-trivial families of abelian varieties over the generic curve, for example by the Kodaira-Parshin construction \cite[\S~5.1]{landesmanlitt}. 
\end{rmk}
\section{Quaternionic and unitary Shimura varieties}\label{section:shimura}
In this section we collect some results about quaternionic Shimura varieties and their special fibers. For a much more thorough treatment we refer the reader to \cite{tianxiao}.  We  fix throughout an isomorphism $\iota: \mb{C} \xrightarrow{\simeq} \overline{\mb{Q}}_p$. Let $F$ be a totally real field, and $p$ a prime number.


\subsection{Quaternionic Shimura varieties}
Now let $\mathrm{S}$ be  a subset of the places of $F$ of even size. 

We will always work under the following
\begin{hypothesis}\label{hypothesis}\hfill
\begin{enumerate}
\item We have $p\notin \rm{S}$.
\item The field $F$ is inert at $p$.

\end{enumerate}
\end{hypothesis}
\begin{rmk}\label{rmk:hypothesis}
We comment on each of our hypotheses. Hypothesis (1) is to guarantee that the Shimura varieties have good reduction, so that it makes sense to talk about local systems. Hypothesis (2) is a technical assumption caused by the fact that quaternionic Shimura varieties do not carry natural abelian schemes. More precisely, the key issue is that we would like to bound the degree of non-trivial maps from curves to our Shimura varieties by some kind of Frobenius untwisting procedure (c.f. \cite[Theorem~6.1]{xia}), in the case when certain Kodaira-Spencer maps vanish. Such a procedure would seem to require natural abelian schemes and $p$-divisible groups on the Shimura variety in question. It may be interesting to study a group theoretic version of Frobenius untwisting which would allow us to lift Hypothesis (2). 
\end{rmk}
Denote by $B_{\rm{S}}$ the quaternion algebra over $F$ ramified precisely at the places in $\rm{S}$. We have the associated reductive group 
\[
G_{\rm{S}}\defeq \mathrm{Res}_{F/\mb{Q}}(B_{\rm{S}}^{\times}).
\]

\begin{defn} \hfil
\begin{enumerate}
\item For a compact open subgroup $K \subset G_{\mathrm{S}}(\mb{A}^{\infty})$ of the form $K=K_pK^p$, we let $\Sh_K$ denote the Shimura variety, defined over the a number field $F_{\rm{S}}$ known as the reflex field, whose complex points are
\[
\Sh_K(\mb{C})\defeq G_{\mathrm{S}}(\mb{Q})\backslash (\mf{h}_{\mathrm{S}}\times G_{\mathrm{S}}(\mb{A}^{\infty}))/K.
\]
Here, $\mf{h}_{\rm{S}}$ denotes the $G_{\mathrm{S}}(\mb{R})$-conjugacy class of a Deligne homomorphism $h_{\mathrm{S}}: \mb{S} \rightarrow G_{\mathrm{S}, \mb{R}}.$ We refer the reader to \cite[\S 3.1]{tianxiao} for details of the definition of $h_{\rm{S}}$.
\item 
In \cite{tianxiao}, for the specific choice of $K_p$ as in \cite[\S 3.3]{tianxiao}, using an auxiliary unitary Shimura variety, the authors construct a smooth integral model $\mscr{S}/W(k)$ of $\Sh$. We let  $\mscr{S}_k$ denote the special fiber of this integral model. In a situation when we want to emphasize the choice of the set $\rm{S}$, we will denote it by $\mscr{S}(G_{\mathrm{S}})_k$.
\end{enumerate}
\end{defn}
\begin{rmk}
The choice of $K_p$ is to guarantee smoothness of the integral model; for smaller choices of $K_p$, the integral models have complicated singularities which we will not deal with in this work.
\end{rmk}
We will refer to any of $\Sh_K, \mscr{S}, \mscr{S}_k$ as a  quaternionic Shimura variety.
\subsection{Unitary Shimura varieties}
Since quaternionic Shimura varieties themselves are not solutions to a moduli problem and therefore do not carry universal abelian varieties and their associated bundles, it is customary to study a closely related  Shimura variety which does have these properties, and then  transfer all objects back to the quaternionic one. In this section we recall some results about these auxiliary Shimura varieties. 

Let $E/F$ be a CM extension satisfying the conditions in Section 3.4 of \cite{tianxiao}, and let $\Sigma_{E, \infty}$ denote its set of archimedean places. The quaternion algebra $B_{\rm{S}}$ is split over $E$. To define the Shimura datum, one must also fix a choice of lift of each $\tau \in \Sigma_{\infty}$; we denote this set by $\tilde{S}_{\infty}\subset \Sigma_{E, \infty}$. For $\tilde{\tau}\in \Sigma_{E, \infty}$, let $\tilde{\tau}^c$ denote its complex conjugate. Attached to $B_S$ and $E/F$ is a unitary group $G'_{\tilde{S}}$; for the precise definition we refer the reader to \cite[\S 3.5]{tianxiao}.
\begin{defn}
For each compact open subgroup $K'$ of $G'_{\tilde{S}}(\mb{A}_{\mb{Q}})$, we have a Shimura variety $\Sh_{K'}$ of PEL type. For $K'=K_p'K^{p'}$ where $K_p'$ is the level at $p$ as in \cite[Section 3.9]{tianxiao}, we have  a smooth canonical integral model $\mscr{S}'/W(k)$, whose special fiber we denote by $\mscr{S}'_{k}$. Furthermore, we denote by $A$ the universal abelian scheme over $\mscr{S}'$\footnote{since we will not have an abelian scheme over $\mscr{S}$, we omit the prime in the notation of $A$ and hope it will not cause any confusion}.
\end{defn}

The abelian variety $A$ comes with an action of $M_2(E)$; we fix an idempotent 
\[
\mf{e}=\begin{pmatrix} 
1 & 0\\
0 & 0
\end{pmatrix}
\]
in $M_2(E)$. Let $\omega_{A^{\vee}}$ be the sheaf of invariant differentials on the dual abelian variety $A^{\vee}$: it is a vector bundle on $\mscr{S}_k$, and also has an action of $M_2(E)$.  Let $\omega^{\circ}_{A^{\vee}}\defeq \mf{e}\omega_{A^{\vee}}$, which we will sometimes refer to as the reduced invariant differentials. Taking the $\tilde{\tau}$-component for each $\tilde{\tau}\in \Sigma_{E, \infty}$ gives a vector bundle $\omega^{\circ}_{A^{\vee}, \tilde{\tau}}$. We will sometimes omit the subscript $A^{\vee}$ for brevity.

\begin{prop}
The vector bundle $\omega^{\circ}_{\tilde{\tau}}$ has rank one if $\tilde{\tau}$ lies above a place $\tau \in \Sigma_{\infty}-\mathrm{S}_{\infty}$. Otherwise, its rank is two if $\tilde{\tau}\in \tilde{\mathrm{S}}_{\infty}$, and zero otherwise.
\end{prop}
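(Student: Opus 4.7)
The plan is to reduce the statement to the signature data of the unitary Shimura datum $(G'_{\tilde{\mathrm{S}}}, h'_{\tilde{\mathrm{S}}})$, since $\omega^{\circ}_{A^\vee, \tilde{\tau}}$ is obtained from the Hodge bundle of the universal abelian scheme $A$ by decomposing along the $E$-action and then applying the idempotent $\mf{e}$. The argument is a formal computation, but requires careful tracking of conventions.

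First, I would extract from \cite[\S 3.5]{tianxiao} the signature of the Deligne homomorphism $h'_{\tilde{\mathrm{S}}}: \mb{S} \to G'_{\tilde{\mathrm{S}}, \mb{R}}$ at each archimedean place. At $\tau \in \Sigma_\infty - \mathrm{S}_\infty$, the quaternion algebra $B_{\mathrm{S}}$ is split, and the local Deligne homomorphism (once transported via the chosen lift $\tilde\tau$ to the unitary group) is conjugate to the standard $h: z \mapsto \mathrm{diag}(z, \bar z)$, producing signature $(1,1)$ at $\tilde\tau$. At $\tau \in \mathrm{S}_\infty$, $B_{\mathrm{S}}$ is ramified and $h'_{\tilde{\mathrm{S}}}$ is the unique homomorphism giving signature $(2,0)$ at $\tilde\tau \in \tilde{\mathrm{S}}_\infty$ (and necessarily $(0,2)$ at the conjugate place $\tilde\tau^c$).

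Second, I would invoke the PEL formalism for $\mscr{S}'$ to identify $\omega_{A^\vee}$ with $\Fil^1 H^1_{\dR}(A/\mscr{S}')$ as an $\mathcal{O}_{\mscr{S}'} \otimes M_2(\mathcal{O}_E)$-module. The $E$-action decomposes $\omega_{A^\vee}$ as a direct sum indexed by $\Sigma_{E,\infty}$ (via the fixed isomorphism $\iota: \mb{C} \simeq \overline{\mb{Q}}_p$), and the rank of the $\tilde\tau$-isotypic summand $\omega_{A^\vee, \tilde\tau}$ is twice the first entry of the signature at $\tilde\tau$, the factor of two coming from the retained $M_2$-module structure. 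This yields ranks $2$, $4$, $0$ in the three cases before applying the idempotent. Multiplying by $\mf{e}$ — a primitive rank-one idempotent in $M_2(E)$ — then halves each of these, producing the claimed ranks $1$, $2$, $0$.

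The main obstacle is purely organizational: one must pin down the conventions of \cite{tianxiao} for the Deligne datum, the identification $\omega_{A^\vee} \simeq \Fil^1 H^1_{\dR}(A/\mscr{S}')$ versus the analogous statement for $\omega_A$, and the compatibility of $\mf{e}$ with the Morita equivalence $B_{\mathrm{S}} \otimes_F E \cong M_2(E)$ that refines the $B$-action to a genuine $M_2(E)$-action. In particular one must verify that $\mf{e}$ picks out a rank-one submodule uniformly in $\tilde\tau$, rather than interacting non-trivially with the archimedean decomposition. Once these conventions are aligned, the proposition is a direct rank count.
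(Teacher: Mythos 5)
Your proposal is correct; note that the paper itself gives no proof of this proposition, stating it as a recollection of the setup of \cite{tianxiao}, where it follows from exactly the signature computation you describe (signature $(1,1)$ at lifts of split archimedean places, $(2,0)$ versus $(0,2)$ at the two lifts of a place in $\mathrm{S}_\infty$, with the central $E$-action commuting with $\mf{e}$ so that the rank-one idempotent uniformly halves each $\tilde{\tau}$-component). The one convention to watch is that the paper works with de Rham \emph{homology}, so $\omega_{A^\vee,\tilde{\tau}}$ is $\Fil^1 H_1^{dR}(A)_{\tilde{\tau}}$ rather than $\Fil^1 H^1_{dR}(A)_{\tilde{\tau}}$, but this does not affect the rank count.
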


It fits into a short exact sequence 
\[
0\rightarrow \omega_{A^{\vee}, \tilde{\tau}}\rightarrow H^{dR}_{1, \tilde{\tau}}(A) \rightarrow \Lie(A)_{\tilde{\tau}} \rightarrow 0,
\]
where $H_1^{dR}$ denotes the de Rham \emph{homology}\footnote{we follow the convention of \cite{tianxiao} and consider the de Rham homology} and carries the natural Gauss-Manin connection. We sometimes denote the subbundle $\omega_{A^{\vee}, \tau}$ by $\Fil^1_{\tau}$ to emphasize that it is the Hodge filtration.

The vector bundles $\omega^{\circ}_{\tilde{\tau}}$ are Hecke equivariant, and we may transfer them to the quaternionic Shimura variety by \cite[\S~2.12]{tianxiao}.

\begin{defn}
For each embedding $\tau \in \Sigma_{\infty}$, let $\omega_{\tau}$ denote the vector bundle on $\mscr{S}_k$ which is the transfer of $\omega^{\circ}_{\tilde{\tau}}$ from $\mscr{S}'_k$, for either choice of lift $\tilde{\tau}$ of $\tau$. Similarly, let $H_{1, \tau}^{dR}$ denote the vector bundle with connection transferred to $\mscr{S}_k$. 
\end{defn}
\begin{prop}\label{prop:degzero}
For each $\tau$, we have that $\bigwedge^2 H_{1, \tau}^{dR}$ is trivial in the rational Picard group.
\end{prop}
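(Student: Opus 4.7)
The plan is to pull the question back to the auxiliary unitary Shimura variety $\mscr{S}'_k$, where the bundle $H^{dR,\circ}_{1,\tilde{\tau}}$ is constructed explicitly from the universal polarized abelian scheme $A$, and to exploit the polarization to relate the $\tilde{\tau}$-component with the $\tilde{\tau}^c$-component. Concretely, from the Hodge sequence
\[
0 \to \omega^\circ_{A^\vee,\tilde{\tau}} \to H^{dR,\circ}_{1,\tilde{\tau}} \to \Lie^\circ(A)_{\tilde{\tau}} \to 0
\]
I would first read off $\det H^{dR,\circ}_{1,\tilde{\tau}} \simeq \omega^\circ_{A^\vee,\tilde{\tau}} \otimes \det \Lie^\circ(A)_{\tilde{\tau}}$, and then use the standard cotangent--tangent duality $\Lie(A)_{\tilde{\tau}} \simeq (\omega_{A,\tilde{\tau}})^\vee$ to rewrite this as $\omega^\circ_{A^\vee,\tilde{\tau}} \otimes (\omega^\circ_{A,\tilde{\tau}})^{-1}$ in the rational Picard group.

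Next I would invoke the polarization $\lambda: A \to A^\vee$. Since $\lambda$ is $\mc{O}_E$-linear with respect to the Rosati involution, which restricts to complex conjugation on the CM field $E$, it induces a rational isomorphism $\omega^\circ_{A,\tilde{\tau}} \simeq \omega^\circ_{A^\vee,\tilde{\tau}^c}$ on $\mscr{S}'_k$ (using Morita equivalence to keep track of the idempotent $\mf{e}$). Substituting, one obtains
\[
\det H^{dR,\circ}_{1,\tilde{\tau}} \;\simeq\; \omega^\circ_{A^\vee,\tilde{\tau}} \otimes (\omega^\circ_{A^\vee,\tilde{\tau}^c})^{-1}
\]
in $\mathrm{Pic}(\mscr{S}'_k) \otimes \mb{Q}$.

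To conclude, I would apply the transfer procedure of \cite[\S 2.12]{tianxiao} to descend to $\mscr{S}_k$. By the very definition of $\omega_\tau$, both lifts $\tilde{\tau}$ and $\tilde{\tau}^c$ of $\tau$ yield the same automorphic bundle $\omega_\tau$ on the quaternionic side; hence the right-hand side transfers to $\omega_\tau \otimes \omega_\tau^{-1} \simeq \mc{O}$, proving the claim.

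The main obstacle I anticipate is book-keeping the case distinction according to whether $\tau \in S_\infty$ or not: when $\tau \in S_\infty$ the sheaf $\omega^\circ_{\tilde{\tau}}$ has rank $0$ or $2$ rather than $1$, so the Hodge sequence degenerates and one of the factors in the above expression vanishes on the nose. In that case the same polarization-plus-transfer philosophy applies, but one argues directly with $\det \omega^\circ_{\tilde{\tau}}$ and $\det \Lie^\circ(A)_{\tilde{\tau}^c}$ (both rank-two), which are rational inverses of each other by the polarization and transfer to the same bundle on $\mscr{S}_k$. A uniform alternative, bypassing the case distinction entirely, is the observation that $\det H^{dR}_{1,\tau}$ is the automorphic line bundle associated to the reduced-norm character $\mathrm{Nrd}_\tau$ of $G_S$; since this character extends to a character of the whole group $G_S$, the standard theory of automorphic bundles forces its class to vanish in $\mathrm{Pic}(\mscr{S}_k)\otimes\mb{Q}$.
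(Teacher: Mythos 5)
The paper does not actually prove this statement: its ``proof'' is a one-line citation of \cite[Lemma~6.2]{tianxiao}. Your proposal is therefore doing something genuinely different, namely reconstructing the argument behind the cited lemma, and the skeleton you give (Hodge sequence to compute the determinant, Rosati/polarization to swap $\tilde{\tau}$ for $\tilde{\tau}^c$, then transfer to the quaternionic side) is the right one and is essentially how Tian--Xiao argue. The advantage of your route is that it makes visible where the rational (as opposed to integral) triviality comes from: the polarization is only a quasi-isogeny and the comparison of the two CM-lifts only holds up to torsion.

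The one step you should not treat as free is the assertion that ``both lifts $\tilde{\tau}$ and $\tilde{\tau}^c$ of $\tau$ yield the same automorphic bundle $\omega_\tau$.'' On the unitary Shimura variety $\mscr{S}'_k$ the bundles $\omega^\circ_{A^\vee,\tilde{\tau}}$ and $\omega^\circ_{A^\vee,\tilde{\tau}^c}$ correspond to different characters of the Levi and are \emph{not} isomorphic there; the identification of their transfers to $\mscr{S}_k$ is exactly the content of the transfer formalism of \cite[\S~2.12]{tianxiao} and is essentially equivalent to the statement being proved. In the paper this identification is smuggled into the definition of $\omega_\tau$ (``for either choice of lift''), so quoting that definition makes your argument circular unless you independently justify it --- e.g.\ by observing that the two lifts differ by a character of $G_{\mathrm{S}}$ factoring through the reduced norm, whose associated line bundle is pulled back from a zero-dimensional Shimura variety and hence is torsion in $\mathrm{Pic}$. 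That is precisely your ``uniform alternative,'' and I would promote it from an aside to the actual proof of the missing step; note also that for $\tau\in\mathrm{S}_\infty$ the two lifts give bundles of ranks $2$ and $0$, so any identification can only be at the level of determinants, as you anticipate.
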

\begin{proof}
This is \cite[Lemma 6.2]{tianxiao}.
\end{proof}

\begin{prop}\label{prop:amplehodge}
Let $\Omega_{\mscr{S}_k}^{\mathrm{top}}$ denote the sheaf of top differential forms on $\mscr{S}_k$. We have an equality
\[
\Bigg[ \bigotimes_{\tau \in \Sigma_{\infty} - S_{\infty}} \omega_{\tau}^2 \Bigg] \cong \Big[ \Omega_{\mscr{S}_k}^{\mathrm{top}} \Big].
\]
Moreover, $\Omega_{\mscr{S}_k}^{\mathrm{top}}$ is ample.
\end{prop}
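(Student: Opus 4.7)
My plan is to derive the formula on the auxiliary unitary Shimura variety $\mscr{S}'_k$, where the universal abelian scheme $A$ makes the Kodaira--Spencer isomorphism available, and then to transfer the resulting identification back to $\mscr{S}_k$ via \cite[\S~2.12]{tianxiao}. On $\mscr{S}'_k$, the PEL datum and the polarization on $A$ yield the Kodaira--Spencer isomorphism
\[
\mathrm{KS}: \Omega^1_{\mscr{S}'_k} \xrightarrow{\;\sim\;} \Bigl(\bigoplus_{\tilde\tau \in \Sigma_{E,\infty}} \omega^{\circ}_{A^\vee,\tilde\tau} \otimes \omega^{\circ}_{A^\vee,\tilde\tau^c}\Bigr)^{\mathrm{sym}},
\]
where the symmetry constraint glues the $\tilde\tau$ and $\tilde\tau^c$ summands. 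Inspecting the signature dictionary, both $\omega^{\circ}_{\tilde\tau}$ and $\omega^{\circ}_{\tilde\tau^c}$ are nonzero precisely when $\tilde\tau$ sits over some $\tau \in \Sigma_\infty - S_\infty$, and in that case each is a line bundle; all other $\tilde\tau$ contribute zero. Taking top exterior powers then gives
\[
\Omega^{\mathrm{top}}_{\mscr{S}'_k} \;\cong\; \bigotimes_{\tau \in \Sigma_\infty - S_\infty} \omega^{\circ}_{A^\vee,\tilde\tau}\otimes \omega^{\circ}_{A^\vee,\tilde\tau^c}.
\]

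Next I would invoke the transfer of \cite[\S~2.12]{tianxiao}: both $\omega^{\circ}_{A^\vee,\tilde\tau}$ and $\omega^{\circ}_{A^\vee,\tilde\tau^c}$ descend to the same $\omega_\tau$ on $\mscr{S}_k$ (up to a line bundle trivial in the rational Picard group, arising from the similitude center in the unitary similitude datum), and $\Omega^{\mathrm{top}}_{\mscr{S}'_k}$ descends to $\Omega^{\mathrm{top}}_{\mscr{S}_k}$. The displayed formula on $\mscr{S}'_k$ then becomes the desired identification $\bigotimes_{\tau} \omega_\tau^{\otimes 2} \cong \Omega^{\mathrm{top}}_{\mscr{S}_k}$ in the rational Picard group.

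For the ampleness, each Hodge line bundle $\omega_\tau$ on $\Sh_K$ is a factor of the determinant of the Hodge bundle of a Shimura variety of abelian type, hence is ample on a suitable compactification in characteristic zero by Baily--Borel positivity; this ampleness spreads to the canonical integral model and hence to $\mscr{S}_k$. A tensor product of ample line bundles is ample, so $\Omega^{\mathrm{top}}_{\mscr{S}_k}$ is ample.

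The main obstacle is the transfer step. One needs to verify carefully that the two choices of lift $\tilde\tau, \tilde\tau^c$ of each $\tau$ both descend to $\omega_\tau$, that the ``extra'' twist on the unitary side (coming from the similitude character) dies in the rational Picard group, and that $\Omega^{\mathrm{top}}$ is preserved under the descent. All of this is implicit in the framework of \cite{tianxiao}, so the proposition is really a consequence of their setup combined with the standard PEL Kodaira--Spencer identification and Baily--Borel positivity; there is no essentially new input beyond honest book-keeping.
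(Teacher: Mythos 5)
The first half of your argument---deriving the identity from the PEL Kodaira--Spencer isomorphism on the auxiliary unitary Shimura variety and transferring back to $\mscr{S}_k$ via \cite[\S~2.12]{tianxiao}---is exactly the route the paper takes (it defers the bookkeeping to \cite[\S 2.3]{zhou}), and is fine.

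The ampleness step, however, has a genuine gap, in two places. First, ``$\omega_\tau$ is a factor of the determinant of the Hodge bundle, hence ample'' is not a valid inference: a tensor factor of an ample line bundle need not be ample (e.g.\ $\mc{O}(2,0)\otimes\mc{O}(0,2)$ on $\mb{P}^1\times\mb{P}^1$), and in fact the individual $\omega_\tau$ are \emph{not} ample on the special fiber $\mscr{S}_k$: by Theorem~\ref{thm:txmain} the Goren--Oort strata are $(\mb{P}^1)^N$-bundles, and some $\omega_\tau$ restrict with non-positive degree to those $\mb{P}^1$-fibers, so the ample cone of $\mscr{S}_k$ in the weights $(k_\tau)$ is strictly smaller in characteristic $p$ than in characteristic zero. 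Second, even for the full product, ampleness does not ``spread'' from characteristic zero to the special fiber: for a proper morphism the locus of fibers on which a given line bundle is ample is \emph{open} in the base, so ampleness on the generic fiber of the integral model gives no control over the closed fiber, which is the fiber you need. The paper avoids both issues by invoking Faltings--Chai \cite[\S~2.5]{faltingschai}: the determinant of the Hodge bundle of a non-isotrivial abelian scheme is ample, a characteristic-free statement that applies directly on $\mscr{S}'_k$; combined with your first identity and the triviality (in the rational Picard group) of $\det\omega^{\circ}_{\tilde\tau}$ for $\tilde\tau$ above $\mathrm{S}_\infty$, this yields ampleness of $\Omega^{\mathrm{top}}_{\mscr{S}_k}$ after transfer. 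You should replace the Baily--Borel-plus-spreading step with this input.
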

\begin{proof}
The first part is essentially a consequence of the Kodaira-Spencer isomorphism, and we refer the reader to \cite[\S 2.3]{zhou} for details. Using this equality, the second part follows from the result of Faltings and Chai \cite[\S~2.5]{faltingschai} that determinant of the Hodge bundle for a non-isotrivial family of abelian varieties is ample.
\end{proof}
\subsection{Local systems on Shimura varieties}\label{section:localsystems}
Let $E/F$ be as above, and let $L\subset \overline{\mb{Q}}$ be a number field containing all embeddings of $E$. Furthermore, let $\mf{l}$ be a finite place of $L$ not lying above $\ell \neq p$, and $L_{\mf{l}}$ be the completion of $L$ at $\mf{l}$.

We have the $\ell$-adic Tate module $V'$ of $A/\mscr{S}_k'$, which is a $\mb{Q}_{\ell}$-local system on $\mscr{S}_k'$. We have a decomposition
\[
V'\otimes_{\mb{Q}_{\ell}}L_{\ell} \simeq \bigoplus_{\tau \in \Sigma_{\infty}} (V'_{\tilde{\tau}})^{\circ, \oplus 2}\oplus (V'_{\tilde{\tau}^c})^{\circ, \oplus 2}. 
\]
Here, $(V'_{\tilde{\tau}})^{\circ}$ denotes the piece where $E$ acts through $\tilde{\tau}$ and projecting using $\mf{e}$, and similarly for $\tilde{\tau}^{c}$ replacing $\tilde{\tau}$. It is a rank two $L_{\ell}$-local system on $\mscr{S}_k'$. Moreover, each of the $V_{\tilde{\tau}}^{\circ}$ is Hecke equivariant, and therefore transfer to $\mscr{S}_k$ by \cite[Corollary 2.13]{tianxiao}. In fact, the resulting local system descends to $\mscr{S}_{k_0}$ for some finite field $k_0$; fixing some lift $\tilde{\tau}$ of $\tau\in \Sigma_{\infty}$, we will denote by $V_{\tau}$ the resulting rank two local system. When we wish to emphasize the choice of $\rm{S}$, we sometimes denote it as $V_{\tau}(\rm{S})$.

\subsection{Partial Hasse invariants}
The purpose of this section is to recall some facts about partial Hasse invariants, following \cite{tianxiao}.

The universal abelian scheme $A$ has the usual Frobenius and Verschiebung maps 
\[
F: A^{(p)} \rightarrow A,
\]
\[
V: A \rightarrow A^{(p)}.
\]
These induce maps on de Rham homology which we denote by the same symbols $F$ and $V$:
\[
F: H_{1}^{dR}(A/\mscr{S}_k') \rightarrow H_1^{dR}(A^{(p)}/\mscr{S}_k'),
\]
\[
V: H_{1}^{dR}(A^{(p)}/\mscr{S}_k') \rightarrow H_1^{dR}(A/\mscr{S}_k').
\]
\begin{defn}
For each $\tilde{\tau}\in \Sigma_{E, \infty}$, we define the essential Verschiebung map to be 
\[
H_1^{dR}(A/\mscr{S}_k')_{\tilde{\tau}}^{\circ}\rightarrow H_1^{dR}(A^{(p)}/\mscr{S}_k')_{\tilde{\tau}}^{\circ} \simeq (H_1^{dR}(A/\mscr{S}_k')^{\circ}_{\sigma^{-1}\tilde{\tau}})^{(p)},
\]
given by 
\[
x \mapsto \begin{cases} 
V(x) \ \mathrm{when} \ s_{\sigma^{-1}\tilde{\tau}}= \mathrm{0 \ or\ 1},\\
F^{-1}(x)\ \mathrm{otherwise}.
\end{cases}
\]Transferring back to the quaternionic Shimura varieties, we have similar Verschiebung maps on $\mscr{S}_k$; by abuse of notation, we will denote any of these maps as $V_{\rm{es}}$.h\end{defn}
\begin{notation}\label{notation:sigma}
For $\tau \in \Sigma_{\infty}-\mathrm{S}_{\infty}$, let $n_{\tau}\geq 1$ be  the integer such that $\sigma^{-1}\tau, \cdots, \sigma^{-n_{\tau}+1}\tau \in \mathrm{S}_{\infty}$, and $\sigma^{-n_{\tau}}\tau \notin \mathrm{S}_{\infty}$.
\end{notation}

\begin{defn}
For each $\tau \in \Sigma_{\infty}-\mathrm{S}_{\infty}$, by iterating $V_{\rm{es}}$ $n_{\tau}$ times, we have  a homomorphism of line bundles
\[
h_{\tau}: \omega_{\tau} \rightarrow (\omega_{\sigma^{-n_{\tau}}\tau})^{\otimes p^{n_{\tau}}}.
\]
The $h_{\tau}$'s are referred to as the partial Hasse invariants.
\end{defn}
\begin{rmk}
The $h_{\tau}$'s may in fact depend on a choice of a lift $\tilde{\tau}\in \Sigma_{E, \infty}$. However, the vanishing set of $h_{\tau}$ does not depend on such a choice \cite[Lemma 4.5]{tianxiao}, and so we omit this from the notation.
\end{rmk}

\subsection{Goren-Oort stratification}
\begin{defn} For each subset $\rm{T} \subset \Sigma_{\infty}-\rm{S}_{\infty}$, we define the Goren-Oort stratum $\mscr{S}_{k, \rm{T}}$ to be the common vanishing set of the $h_{\tau}$'s, for $\tau \in \rm{T}$. 
\end{defn}

\begin{defn}\label{defn:soft}
Suppose $\mathrm{T}\subsetneq \Sigma_{\infty}-\mathrm{S}_{\infty}$.  Write  $\mathrm{S}_{\infty}\cup \mathrm{T}=\coprod C_i$ as a disjoint union of chains. More precisely, each $C_i$ is of the form 
\[
C_i=\{\tau_i, \sigma^{-1}\tau_i, \cdots , \sigma^{-m_i}\tau_i\}
\]
for some $\tau_i$ and $m_i\geq 0$. Define 
\[
C_i'\defeq \begin{cases} 
C_i\cap \mathrm{T} \ \mathrm{if} \ |C_i\cap \mathrm{T}|\  \rm{is \ even,}\\
(C_i\cap \mathrm{T})\cup \{\sigma^{-m_i-1}\tau_i\} \ \mathrm{if} \ |C_i\cap \mathrm{T}|\  \rm{is \ odd.}
\end{cases}
\]
Finally, define $\mathrm{T}' \defeq \cup C_i'$, and $\mathrm{S}(\mathrm{T})\defeq \mathrm{S}\cup \mathrm{T}'$.
\end{defn}

The following gives  a description of the Goren-Oort strata in terms of smaller quaternionic Shimura varieties. 
\begin{thm}[{\cite[Theorem 5.2]{tianxiao}}]\label{thm:txmain}
Suppose  $\mathrm{T} \subsetneq \Sigma_{\infty}-\mathrm{S}_{\infty}$. Let  $\rm{S}(\rm{T})\subset \Sigma_{\infty}$ be as in Definition~\ref{defn:soft}. Then  the Goren-Oort stratum $\mscr{S}(G_{\mathrm{S}})_{k, \rm{T}}$ is equipped with a map
\[
\pi_{\mathrm{T}}: \mscr{S}(G_{\mathrm{S}})_{k, \rm{T}} \rightarrow \mscr{S}(G_{\mathrm{S}(\mathrm{T})})_k,
\]
exhibiting  it as  a $(\mb{P}^1)^N$-bundle over $\mscr{S}(G_{\mathrm{S}(\mathrm{T})})_k$ for some $N$. 
\end{thm}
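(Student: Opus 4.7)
The plan is to lift the problem to the auxiliary unitary Shimura variety $\mscr{S}'_k$, where the universal abelian scheme $A$ and its associated $p$-divisible group with $M_2(E)$-action are available, prove the analogous statement there, and then descend. The reason this reformulation helps is that the partial Hasse invariants $h_\tau$, the vanishing of which cuts out $\mscr{S}(G_{\mathrm{S}})_{k,\mathrm{T}}$, admit a clean interpretation on $\mscr{S}'_k$ in terms of the essential Verschiebung $V_{\rm es}$ acting on $H^{dR}_{1,\tilde\tau}(A)^\circ$: namely $h_\tau=0$ is precisely the condition that a composition of $V_{\rm es}$'s along the $\sigma^{-1}$-chain from $\tau$ to $\sigma^{-n_\tau}\tau$ is zero, i.e.\ the unit-root slot of the Dieudonn\'e module at $\tau$ collapses to a supersingular slot. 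So the first step is to translate the condition ``$x\in \mscr{S}(G_{\mathrm S})_{k,\rm T}$'' into data about the crystalline Dieudonn\'e module $\mb{D}(A[p^\infty])$ decomposed according to $\Sigma_{E,\infty}$.

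Next, I would construct the target Shimura variety and the map geometrically by an isogeny. The combinatorial definition of $\mathrm S(\mathrm T)$ (completing each chain $C_i\cap \mathrm T$ to an even set by possibly attaching one extra place $\sigma^{-m_i-1}\tau_i$) is exactly calibrated so that the $\sigma^{-1}$-chains of $\rm S\cup T'$ have even length inside each maximal chain, which is the combinatorial footprint of an \emph{internal} quasi-isogeny of the chain's contribution to $A[p^\infty]$ that rebalances its signature from type $\rm S$ to type $\rm S(\rm T)$. Concretely, on each chain I would define a new $p$-divisible group $A[p^\infty]'$ together with a quasi-isogeny $A[p^\infty]\to A[p^\infty]'$ that is an isomorphism away from the chain and, within the chain, is obtained by composing Frobenius/Verschiebung maps in the pattern forced by $\mathrm S(\mathrm T)$. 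This produces from $A$ a new abelian variety $A'$ equipped with $M_2(E)$-action of signature type $\rm S(\rm T)$, which therefore defines a point of the unitary Shimura variety $\mscr{S}(G'_{\tilde{\mathrm S(\mathrm T)}})_k$, yielding the morphism $\pi_{\mathrm T}$ after transferring back to quaternionic Shimura varieties via the $\mscr{S}'\to\mscr{S}$ correspondence of \cite[\S 2.12]{tianxiao}.

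The final and most delicate step is to identify the fibers. On the unitary side, the data of $A$ lifting a given $A'$ reduces, by the quasi-isogeny construction, to a choice of Hodge filtration $\omega_{A^\vee,\tilde\tau}^\circ$ inside $H^{dR}_{1,\tilde\tau}(A)^\circ$ at each $\tilde\tau$ with $\tau\in \mathrm T'$ where we have added a new ramified place: at these places the filtration is unconstrained by the unitary PEL moduli condition (because $A'$ is already supersingular in that slot) but must be an isotropic line in a $2$-dimensional fiber, which produces a $\mb{P}^1$-factor. Counting these gives $N=\#\mathrm T'$ (or more precisely, after descent to the quaternionic side, $N$ equals the number of new $\mathrm P^1$-factors that survive one choice of lift $\tilde\tau$ per $\tau$). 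The local triviality of the bundle follows once one shows that the filtration lines vary in a canonical $\mb{P}^1$-subbundle of a rank-$2$ vector bundle on $\mscr{S}(G_{\rm S(T)})_k$, which is straightforward from the flatness properties of the Hodge filtration.

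The hard part will be the middle step: carefully setting up the quasi-isogeny with its polarization, $O_E$-action and level structure, and verifying that the resulting $A'$ really does land in the signature type $\mathrm S(\mathrm T)$ and is independent (up to isomorphism in the Shimura-variety sense) of the auxiliary choices, since the chain-by-chain recipe a priori depends on how one groups places. The parity trick in Definition~\ref{defn:soft} (adding one extra element when $|C_i\cap\mathrm T|$ is odd) is exactly what is needed to keep $\mathrm S(\mathrm T)$ of even size so that the new quaternion algebra exists; but checking Hecke equivariance, and that the construction descends through the unitary/quaternionic correspondence without introducing gerbe-type ambiguities, is the genuine technical bottleneck. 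Once that is settled, the $(\mb{P}^1)^N$-bundle structure follows from a local computation with the Kodaira--Spencer map at each added place.
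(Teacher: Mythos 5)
First, a point of order: the paper does not prove this statement at all --- it is imported verbatim as \cite[Theorem 5.2]{tianxiao}, so there is no internal proof to compare against. Your sketch is therefore being measured against the argument in Tian--Xiao itself. At the level of strategy you have reconstructed it correctly: pass to the auxiliary unitary (PEL) Shimura variety, reinterpret the vanishing of the partial Hasse invariants via the essential Frobenius/Verschiebung on the reduced Dieudonn\'e/de Rham modules, build a new abelian variety $A'$ of signature type $\mathrm{S}(\mathrm{T})$ by an explicit $p$-quasi-isogeny chain by chain (the parity adjustment in Definition~\ref{defn:soft} being exactly what makes the new signature realizable by a quaternion algebra), and then descend to the quaternionic side by Hecke equivariance. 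This is indeed how the theorem is proved.

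There are, however, two genuine problems. The first is concrete: your fiber count $N=\#\mathrm{T}'$ is wrong. The stratum $\mscr{S}(G_{\mathrm{S}})_{k,\mathrm{T}}$ has dimension $\#(\Sigma_\infty-\mathrm{S}_\infty)-\#\mathrm{T}$ while the target has dimension $\#(\Sigma_\infty-\mathrm{S}_\infty)-\#\mathrm{T}'$, so necessarily $N=\#\mathrm{T}'-\#\mathrm{T}$, i.e.\ the number of chains $C_i$ with $|C_i\cap\mathrm{T}|$ odd. (Test case: a Hilbert modular surface with $\mathrm{T}$ a single place gives $\mathrm{T}'=\Sigma_\infty$, and the stratum is a single $\mb{P}^1$-bundle over a discrete Shimura set, not $(\mb{P}^1)^2$.) The second is that you have misplaced the technical bottleneck. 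The Hecke-equivariance and unitary-to-quaternionic descent is comparatively formal (it is handled once and for all in \cite[\S 2.12]{tianxiao}); the genuinely hard part is upgrading the pointwise description of the fibers --- ``a free choice of line in a rank-two slot'' --- to a scheme-theoretic $(\mb{P}^1)^N$-bundle structure, i.e.\ showing $\pi_{\mathrm{T}}$ is a morphism of schemes whose formation is compatible with infinitesimal deformations. This requires a careful argument with crystalline Dieudonn\'e theory and Grothendieck--Messing on the tangent spaces of the stratum, and is where most of the work in Tian--Xiao's Section 5 lives; ``straightforward from the flatness properties of the Hodge filtration'' does not suffice, since a priori one only controls the closed points of the fibers.
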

Recall that for $L/E/F$ as in section \S~\ref{section:localsystems}, and a prime $\mf{l}$ of $L$ lying above $\ell\neq p$, we have local systems $V_{\tau}(\mathrm{S})$ on $\mscr{S}_{k_0}$.
\begin{prop}
For each $\tau \in \Sigma_{\infty}$, there exists a canonical isomorphism of local systems
\[
V_{\tau}(\mathrm{S}) \simeq \pi_{\mathrm{T}}^*V_{\tau}(\mathrm{S(T)})
\]
on $\mscr{S}(G_{\mathrm{S}})_{k, \rm{T}}$, indexed by $\tau \in \Sigma_{\infty}$.
\end{prop}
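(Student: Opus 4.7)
My strategy is to lift everything to the auxiliary unitary Shimura variety, where the local systems have a direct interpretation as $\tilde{\tau}$-components of the prime-to-$p$ Tate module of the universal abelian scheme, and then use the abelian-scheme-theoretic content of the Tian--Xiao description of the Goren--Oort strata. Concretely, by definition $V_{\tau}(\mathrm{S})$ is the Hecke-equivariant transfer to $\mscr{S}(G_{\mathrm{S}})_{k_0}$ of $(V'_{\tilde{\tau}}(\tilde{\mathrm{S}}))^{\circ}$ from $\mscr{S}'(G'_{\tilde{\mathrm{S}}})_{k_0}$, and the morphism $\pi_{\mathrm{T}}$ of Theorem~\ref{thm:txmain} is itself obtained by descending a corresponding map $\pi'_{\mathrm{T}} : \mscr{S}'(G'_{\tilde{\mathrm{S}}})_{k,\mathrm{T}} \to \mscr{S}'(G'_{\tilde{\mathrm{S}}(\mathrm{T})})_k$. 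Since transfer is compatible with pullback for Hecke-equivariant sheaves, it suffices to produce a canonical Hecke-equivariant isomorphism
\[
(V'_{\tilde{\tau}}(\tilde{\mathrm{S}}))^{\circ} \xrightarrow{\ \sim\ } \pi'^{*}_{\mathrm{T}}\,(V'_{\tilde{\tau}}(\tilde{\mathrm{S}}(\mathrm{T})))^{\circ}
\]
on the unitary Goren--Oort stratum, for each lift $\tilde{\tau}$ of $\tau$.

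\textbf{Quasi-isogeny step.} I would extract from the proof of \cite[Theorem 5.2]{tianxiao} the fact that, over $\mscr{S}'(G'_{\tilde{\mathrm{S}}})_{k,\mathrm{T}}$, the universal abelian scheme $A$ and the pullback $\pi'^{*}_{\mathrm{T}} A''$ of the universal abelian scheme $A''$ on the smaller unitary Shimura variety are linked by a canonical $O_E$-linear, polarization- and level-preserving quasi-isogeny $\phi : A \dashrightarrow \pi'^{*}_{\mathrm{T}} A''$. This quasi-isogeny is exactly the one used in Tian--Xiao to construct the stratification: its existence reflects the vanishing of the partial Hasse invariants at the places in $\mathrm{T}$, which forces the $p$-divisible group of $A$ to differ from that of $A''$ by modifications that are built out of Frobenius and Verschiebung, hence of $p$-power degree. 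In particular $\phi$ is prime-to-$\ell$ and induces an isomorphism on $\ell$-adic rational Tate modules; projecting using the idempotent $\mathfrak{e}$ and extracting the $\tilde{\tau}$-isotypic piece yields the desired canonical isomorphism of rank two $L_{\mathfrak{l}}$-local systems, with canonicity and Hecke-equivariance inherited from those of $\phi$. Transferring back to the quaternionic side via \cite[\S 2.12]{tianxiao} then gives the isomorphism claimed in the proposition, and the construction is visibly independent of the choice of lift $\tilde{\tau}$ since the two choices of $\tilde{\tau}$ give complex-conjugate pieces that are interchanged symmetrically by the transfer.

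\textbf{Main obstacle.} The substantive point is the second step: one must verify that the quasi-isogeny implicit in Tian--Xiao's fibration is of $p$-power degree, i.e.\ that the entire discrepancy between $A$ and $\pi'^{*}_{\mathrm{T}} A''$ on the Goren--Oort stratum is concentrated at $p$. This is morally clear because the stratification is cut out by partial Hasse invariants, which are $p$-adic in nature, but to make it rigorous I would trace through the construction of \cite[\S\S 4--5]{tianxiao} and confirm that at each step where $A$ is modified (e.g., passing through a Frobenius or Verschiebung on some isotypic summand of the $p$-divisible group) the prime-to-$p$ integral structure is preserved. Once this is in hand, all the remaining points are formal consequences of the Hecke-equivariance of the transfer functor and standard compatibility of $\ell$-adic Tate modules with isogenies.
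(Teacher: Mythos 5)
Your proposal is correct and follows essentially the same route as the paper: the paper's proof likewise reduces to the fact, extracted from Tian--Xiao (the paper cites the proof of their Theorem 5.8), that the universal abelian schemes on the two unitary Shimura varieties become $p$-isogenous over the Goren--Oort stratum, whence the prime-to-$p$ Tate modules and their $\tilde{\tau}$-isotypic pieces are canonically identified and the identification descends to the quaternionic side. Your write-up simply makes explicit the $p$-power-degree verification and the transfer compatibility that the paper leaves implicit.
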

\begin{proof}
By the proof of \cite[Theorem 5.8]{tianxiao}, the universal abelian varieties on the unitary Shimura varieties for $\rm{S}$ and and $\rm{S}(T)$ are $p$-isogenous when the former is restricted to the Goren-Oort stratum, and therefore the above local systems are canonically isomorphic. 
\end{proof}

\section{Isomonodromic deformation}
We record some results on isomonodromy in this section. The material here is due to Simpson \cite[\S 8]{simpson2}, and for further details we refer the reader to the thesis of Max Menzies \cite[\S 4.2]{menzies}. For a map of varieties $U\rightarrow V$, by a connection on $U/V$ we mean a vector bundle on $U$ with connection relative to $V$. 

\begin{defn}
For a variety $X$, we write  $\mc{M}_{dR}(X)$ for the stack of rank two vector bundles on $X$ with flat  connection. For a family of varieties $\pi: X \rightarrow S$, we write $\mc{M}_{dR}(X/S)$ for the stack of rank two flat vector bundles over $X/S$. More precisely, for a test object $\Spec(R)\rightarrow S$, $\mc{M}_{dR}(X/S)(R)$ is the groupoid of rank two vector bundles with flat connection  on $X\times \Spec(R)/\Spec(R)$. By construction the  stack $\mc{M}_{dR}(X/S)$  comes equipped with a map
\[
\mc{M}_{dR}(X/S) \rightarrow S.
\]

\end{defn}

\begin{rmk}
For a pair $(\bar{X}, Z)$ of variety $\bar{X}$ along with simple normal crossing divisor $Z\subset \bar{X}$, we similarly have the stack $\mc{M}_{dR}((\bar{X}, Z))$ of rank two vector bundles with connection, which are logarithmic along $Z$. The results in this section all have analogues in this set-up, which will be used in the proofs of our main results in the special case which is the Hilbert modular variety. Since this is a very special case, we have written the statements for ordinary, i.e. non-logarithmic connections, and leave  the extension to logarithmic ones to the reader.
\end{rmk}

The stack $\mc{M}_{dR}(X/S)$ carries the structure of a connection over $S$. More precisely, let $(S\times S)^{\wedge}$ denote the completion of the diagonal inside $S\times S$, and $\pr_i: (S\times S)^{\wedge} \rightarrow S$ the natural projection maps. Then the data of a connection on $\mc{M}_{dR}(X/S)$ over $S$ consists of an isomorphism over $(S\times S)^{\wedge}$ 
\begin{equation}\label{eqn:stratify}
\pr_1^*\mc{M}_{dR}(X/S) \xrightarrow{\sim} \pr_2^*\mc{M}_{dR}(X/S),
\end{equation}
which furthermore satisfies a natural cocycle condition that we do not spell out. 

The connection on $\mc{M}_{dR}(X/S)$ allows us to lift tangent vectors on $S$ to tangent vectors on $\mc{M}_{dR}(X/S)$.  For brevity let us write $\mc{M}$ for $\mc{M}_{dR}(X/S)$. Let $m, s$ be  closed points of $\mc{M}$ and $S$ respectively, with $\pi(m)=s$, and $\tilde{s}\in S(k[\epsilon]/\epsilon^2)$ a first order thickening of $s$. Then  let 
\[
s\times \tilde{s}: \Spec(k[\epsilon]/\epsilon^2)\rightarrow (S\times S)^{\wedge}
\]
be the map with (first projecting to $\Spec(k)$ and then) $s$ in the first coordinate, and $\tilde{s}$ in the second. The maps 
\[
s\times \tilde{s}: \Spec(k[\epsilon]/\epsilon^2)\rightarrow (S\times S)^{\wedge},\ m: \Spec(k[\epsilon]/\epsilon^2)\rightarrow \mc{M}
\]
together give a point in $\pr_1^*\mc{M}(\Spec(k[\epsilon]/\epsilon^2))$. Tracing through the isomorphism  (\ref{eqn:stratify}) gives us a point in $\pr_2^*\mc{M}(\Spec(k[\epsilon]/\epsilon^2))$, and therefore a point  $\tilde{m}\in \mc{M}(\Spec(k[\epsilon]/\epsilon^2))$ living over $\tilde{s}$, as required. 
\begin{defn}
With notation as above, we refer to the vector bundle with connection given by $\tilde{m}$ as the isomonodromic deformation of $m$ in the direction $\tilde{s}$. 
\end{defn}
\subsection{Family of families}
We show in our setting that families of  flat vector bundles arising from geometry are isomonodromic. In the complex setting this observation is due to Doran \cite{doran}. In the following $\mscr{S}_k$ denotes the special fiber of a quaternionic Shimura variety as in Section~\ref{section:shimura}, and $H_{1, \tau}^{dR}$ the natural flat vector bundles on it.
\begin{prop}
Suppose $X \rightarrow S$ is a smooth family of curves, and we have a map 
\[
F: X\rightarrow \mscr{S}_k\otimes S.
\]
Then, for each $\tau$, $F^*H_{1, \tau}^{dR}$ is a connection on $X/S$, and the induced map
\[
S\rightarrow \mc{M}_{dR}(X/S)
\]
is isomonodromic.
\end{prop}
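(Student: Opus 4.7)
The plan is to exhibit $F^*H_{1,\tau}^{dR}$ as the pullback of a flat vector bundle on $\mscr{S}_k$ that carries an \emph{absolute} flat connection (flat over $\Spec k$, not merely over $S$), and then to invoke the general principle that relative connections obtained in this way are automatically isomonodromic. In particular, the content of the proposition is purely formal given the construction of $H_{1,\tau}^{dR}$ in Section~\ref{section:shimura}.

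Concretely, the first step is to factor $F$ through the first projection, obtaining a map $\tilde F : X \to \mscr{S}_k$ with $F^*H_{1,\tau}^{dR} = \tilde F^*H_{1,\tau}^{dR}$. Recall that on $\mscr{S}_k$ the vector bundle $H_{1,\tau}^{dR}$ is equipped with the Gauss--Manin connection transferred from the auxiliary unitary Shimura variety $\mscr{S}'_k$, and this connection is flat relative to $\Spec k$. Pulling it back along $\tilde F$ produces an absolute flat connection on $F^*H_{1,\tau}^{dR}$; restricting via the surjection $\Omega^1_X \twoheadrightarrow \Omega^1_{X/S}$ then yields a relative flat connection, which is the object classified by $\mc{M}_{dR}(X/S)$.

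For the isomonodromy assertion, I would unpack the stratification~(\ref{eqn:stratify}) encoding the connection on $\mc{M}_{dR}(X/S)$ over $S$. Horizontality of the induced section $\sigma : S \to \mc{M}_{dR}(X/S)$ amounts to producing a canonical isomorphism between $\pr_1^*\sigma$ and $\pr_2^*\sigma$ over $(S\times S)^{\wedge}$ that satisfies the cocycle condition. For a point $(s_1,s_2)\in (S\times S)^{\wedge}$, both $\pr_i^*F^*H_{1,\tau}^{dR}$ are pullbacks of the same absolute flat bundle $H_{1,\tau}^{dR}$ on $\mscr{S}_k$ along the respective maps $X_{s_i}\to \mscr{S}_k$ obtained from $\tilde F$. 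Parallel transport of the absolute connection in the $S$-direction then supplies the desired identification, and the cocycle condition follows from functoriality of pullback.

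The only nontrivial point is thus to check that this absolute-to-isomonodromic procedure agrees with the Grothendieck stratification described just above Definition~6.1, but this is the standard compatibility between absolute flat connections on a total space and the canonical connection on the moduli stack of relative flat bundles; for this I would cite \cite[\S 8]{simpson2} and \cite[\S 4.2]{menzies}. There is no conceptual obstacle, only a formal unwinding of the isomonodromy stratification; the geometric input is entirely contained in the observation that $H_{1,\tau}^{dR}$ is defined on $\mscr{S}_k$ itself, independently of $S$.
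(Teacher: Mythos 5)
Your proposal is correct and takes essentially the same approach as the paper: the paper likewise reduces everything to the fact that $H_{1,\tau}^{dR}$ carries an absolute flat connection on $\mscr{S}_k$, and makes your ``parallel transport'' step precise via Grothendieck's identification of flat bundles with crystals on the infinitesimal site, together with a commutative diagram showing that $F_{\tilde{s}}^*(V,\nabla)$ and the constant extension $F_s^*(V,\nabla)[\epsilon]$ induce the same crystal on $\mathrm{Crys}(X_s/s[\epsilon])$. The only cosmetic difference is that you first pull the absolute connection back to the total space $X$ before comparing infinitesimally nearby fibers, while the paper compares the two pullbacks directly on $\mscr{S}_k[\epsilon]$; both arguments defer the same formal compatibility to \cite[\S 8]{simpson2} and \cite[\S 4.2]{menzies}.
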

\begin{proof}
For brevity we write $(V, \nabla)$ for the vector bundle with connection $H_{1, \tau}^{dR}$. In the following, for a scheme $U/\Spec(k)$, we will denote by $U[\epsilon]$ its constant base change to $\Spec(k[\epsilon]/\epsilon^2)$, and similarly for vector bundles on $U$. Suppose $s\in S$ is a closed point, and $s \hookrightarrow \tilde{s} \hookrightarrow S$ is a first order thickening; write 
\begin{align*}
X_s &\defeq X\times_S s\\ 
X_{\tilde{s}} &\defeq X\times_S \tilde{s}.
\end{align*}
The isomonodromic deformation (\ref{eqn:stratify}) is equivalent to    the  natural equivalence of categories
\[
\mathrm{VBIC}(X_{\tilde{s}}/\Spec(k[\epsilon]/\epsilon^2)) \rightarrow \mathrm{VBIC}(X_s[\epsilon]/\Spec(k[\epsilon]/\epsilon^2)), 
\]
where  $\mathrm{VBIC}(U/V)$ denotes the category of vector bundles with connection on $U/V$.

The equivalence is obtained by using the fact that both sides are equivalent to the category of crystals on $\mathrm{Crys}(X_s/s[\epsilon])$, a result which is due to Grothendieck \footnote{to follow the notation of \cite{menzies}, we somewhat abusively refer to the crystalline site, even though we have no divided power structure, i.e. we are simply working with the infinitesimal site}:
\begin{equation}\label{eqn:equivcrystal}
\mathrm{VBIC}(X_{\tilde{s}}/s[\epsilon]) \rightarrow (\mathrm{Crystals \ on
\ } \mathrm{Crys}(X_s/s[\epsilon])) \leftarrow \mathrm{VBIC}(X_s[\epsilon]/s[\epsilon]).
\end{equation}
Again, we refer the reader to \cite[\S 4]{menzies} for more details.

Write $F_s, F_{\tilde{s}}$ for the natural maps
\begin{align*} 
F_s: X_s &\rightarrow \mscr{S}_k\\
F_{\tilde{s}}: X_{\tilde{s}} &\rightarrow \mscr{S}_k[\epsilon].
\end{align*}
It remains to show that $F_{\tilde{s}}^*(V, \nabla) \in \mathrm{VBIC}(X_{\tilde{s}}/s[\epsilon])$ and the constant extension  $F_s^*(V, \nabla)[\epsilon]\in \mathrm{VBIC}(X_s[\epsilon]/s[\epsilon])$ map to the same object under the equivalences in (\ref{eqn:equivcrystal}). 
We have the diagram 
\begin{equation}\label{diagram}
\begin{tikzcd}
\mathrm{VBIC}(X_{\tilde{s}}/\Spec(k[\epsilon]/\epsilon^2)) \arrow[r]  & 
(\mathrm{Crystals \ on
\ } \mathrm{Crys}(X_s/s[\epsilon]))   \\
\mathrm{VBIC}(\mscr{S}_k[\epsilon]/\Spec(k[\epsilon]/\epsilon^2)) \arrow[r] \arrow[u, "F_{\tilde{s}}^*"]
&   (\mathrm{Crystals \ on
\ } \mathrm{Crys}(\mscr{S}_k/\Spec(k[\epsilon]/\epsilon^2))) \arrow[u, "F_s^*"]
\end{tikzcd}
\end{equation}
induced by 
\begin{center}
    \begin{tikzcd}
     X_{\tilde{s}} \arrow[r, "F_{\tilde{s}}"] \arrow[d] & \mscr{S}_k[\epsilon] \arrow[d] \\ 
     \Spec(k[\epsilon]/\epsilon^2) \arrow[r, "\sim"] & \Spec(k[\epsilon]/\epsilon^2),
    \end{tikzcd}
\end{center}
and a similar one with  $X_{s}[\epsilon]$ replacing $X_{\tilde{s}}$.
Now, in (\ref{eqn:equivcrystal}), the image of $F_{\tilde{s}}^*(V, \nabla)$ is given by the image of $(V,\nabla)[\epsilon] \in \mathrm{Ob}(\mathrm{VBIC}(\mscr{S}_k[\epsilon]/\Spec(k[\epsilon]/\epsilon^2)))$ in diagram  (\ref{diagram}). The same is true for $X_s[\epsilon]$, so we are done.
\end{proof}
\subsection{Atiyah bundle}
We recall some results to do with the Atiyah bundle, following the exposition of \cite{landesmanlitt}. For a variety $X/k$, let $T_X$ denote its tangent sheaf.
\begin{defn}\label{defn:atiyahbundle}
For a vector bundle $E$ over a variety $X$, we have the associated Atiyah bundle $\At_X(E)$, defined by 

\begin{tikzcd}
0 \arrow[r] & \mscr{E}nd(E) \arrow[r] \arrow[d] & \At_X(E) \arrow[r, "\delta"]  \arrow[d] & T_X \arrow[r] \arrow[d, "\mathrm{id}\otimes \mathrm{id}_E"] & 0\\
0 \arrow[r] & \mscr{E}nd(E) \arrow[r] & \mc{D}^{(1)}_X(E) \arrow[r, "\sigma"] & T_X \otimes \mscr{E}nd(E) \arrow[r]& 0.
\end{tikzcd}

In the above, $\mc{D}_X^{(1)}(E)$ denotes the sheaf of differential operators on $E$ of order at most one, and 
\[
\sigma: \mc{D}_X^{(1)}(E) \rightarrow T_X \otimes \mscr{E}nd(E)
\]
denotes the symbol map. Similarly, for a filtered vector bundle $(E, P)$, we have the associated Atiyah bundle $\At_X(E, P)$

\[
0\rightarrow \mscr{E}nd(E,P) \rightarrow \At_X((E,P)) \xrightarrow{\delta} T_X \rightarrow 0.
\]
\end{defn}

\begin{prop}\label{prop:atiyahsection}
The data of a connection $\nabla$ on $E$ is equivalent to a section 
\[
q^{\nabla}: T_X \rightarrow \At_X(E)
\] 
of the map $\delta$ in the short exact sequence in Definition~\ref{defn:atiyahbundle}.
\end{prop}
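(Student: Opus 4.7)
The plan is to give the standard dictionary between connections and splittings of the Atiyah sequence, essentially going back to Atiyah. The main point is that the Atiyah bundle $\At_X(E)$ can be described as the sheaf of first-order differential operators $D\colon E\to E$ whose symbol has the special form $\xi\otimes \mathrm{id}_E$ for some vector field $\xi$; this is exactly the preimage of $T_X\otimes \mathrm{id}_E\subset T_X\otimes \mscr{E}nd(E)$ under $\sigma$ in the defining pullback square. Under this identification, the map $\delta\colon \At_X(E)\to T_X$ sends such a $D$ to $\xi$, and $\mathscr{E}nd(E)\subset \At_X(E)$ consists of zero-th order operators.

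First I would construct the map in one direction. Given a connection $\nabla\colon E\to \Omega^1_X\otimes E$, for each local vector field $\xi$ on $X$ one gets the covariant derivative $\nabla_\xi\colon E\to E$, which by the Leibniz rule $\nabla_\xi(fs)=\xi(f)\cdot s+f\nabla_\xi(s)$ is a first-order differential operator with symbol $\xi\otimes \mathrm{id}_E$. Hence $\nabla_\xi\in \At_X(E)$, and $\xi\mapsto \nabla_\xi$ defines an $\mc{O}_X$-linear map $q^{\nabla}\colon T_X\to \At_X(E)$ (the $\mc{O}_X$-linearity in $\xi$ is the standard computation, using that $(f\nabla)_\xi = \nabla_{f\xi}$). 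By construction $\delta\circ q^\nabla=\mathrm{id}_{T_X}$, so $q^\nabla$ is a section of $\delta$.

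Next I would construct the inverse map. Given a section $q\colon T_X\to \At_X(E)$ of $\delta$, define a $k$-linear map $\nabla\colon E\to \Omega^1_X\otimes E$ by the rule $\nabla_\xi(s)\defeq q(\xi)(s)$ for a local section $s$ of $E$ and local vector field $\xi$. Since $q(\xi)$ has symbol $\xi\otimes \mathrm{id}_E$, one has $q(\xi)(fs)=\xi(f)\cdot s + f\cdot q(\xi)(s)$, which is precisely the Leibniz rule. The $\mc{O}_X$-linearity of $q$ in $\xi$ ensures that $\nabla_\xi$ is $\mc{O}_X$-linear in $\xi$, so $\nabla$ is a well-defined connection. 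Finally, one checks that the two constructions are mutually inverse: $q^{\nabla^q}=q$ and $\nabla^{q^\nabla}=\nabla$, both of which are immediate from the definitions.

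The only mild subtlety is making sure the identification $\At_X(E)\simeq \{D\in \mc{D}_X^{(1)}(E):\sigma(D)\in T_X\otimes \mathrm{id}_E\}$ agrees with the pullback definition in Definition~\ref{defn:atiyahbundle}; this follows by chasing the defining Cartesian diagram, after which the rest is essentially formal. There is nothing hard here; the statement is really a repackaging of the definition of a connection, and I would expect to include it in a few lines referencing Atiyah's original paper.
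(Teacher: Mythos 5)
Your proof is correct and is the standard Atiyah dictionary between connections and splittings of the Atiyah sequence; the paper states this proposition without proof precisely because it is this classical fact, so your argument supplies exactly the reasoning the paper implicitly relies on. No issues.
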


\begin{prop}[{\cite[Proposition 3.3.2]{landesmanlitt}}]\label{prop:nonzeromap}
Let $(E, \nabla)$ be a vector bundle with flat connection on $X$. Let $q^{\nabla}: T_X \rightarrow \At_X(E)$ be the section  associated to $(E, \nabla)$ by Proposition~\ref{prop:atiyahsection}. Suppose $P$ is a non-trivial filtration of $E$ which is not preserved by $\nabla$. Then  the composition
\[
T_X \xrightarrow{q^{\nabla}} \At_X(E) \rightarrow \At_X(E)/\At_X((E,P))  \simeq \mscr{E}nd(E)/\mscr{E}nd(E,P)
\]
is non-zero.
\end{prop}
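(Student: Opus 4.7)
The plan is to interpret the Atiyah sequence $0\to \mscr{E}nd(E,P)\to \At_X((E,P))\to T_X\to 0$ as the sub-short-exact-sequence of $0\to \mscr{E}nd(E)\to \At_X(E)\to T_X\to 0$ cut out by the condition of \emph{preserving the filtration $P$}. Concretely, I would first verify that $\At_X((E,P))$ is the subsheaf of $\At_X(E)$ whose local sections are first-order differential operators $D$ on $E$ satisfying $D(P_iE)\subset P_iE$ for all $i$ (this is essentially the definition, matching with the fact that the kernel of the symbol map in $\mc{D}^{(1)}_X((E,P))$ is $\mscr{E}nd(E,P)$). Granting this, the snake lemma / direct comparison of the two Atiyah sequences produces the identification $\At_X(E)/\At_X((E,P))\simeq \mscr{E}nd(E)/\mscr{E}nd(E,P)$ in the statement.

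Next I would reformulate what it means for $\nabla$ to preserve $P$ in terms of $q^{\nabla}$. By Proposition~\ref{prop:atiyahsection}, $q^{\nabla}$ sends a local tangent vector $v$ to the first-order operator $\nabla_v$ acting on $E$. The condition that $\nabla$ preserves $P$ is, by definition, that $\nabla_v(P_iE)\subset P_iE$ for every local section $v$ of $T_X$ and every $i$; equivalently, that $q^{\nabla}(v)\in \At_X((E,P))$ for every $v$; equivalently, that $q^{\nabla}$ factors through the inclusion $\At_X((E,P))\hookrightarrow \At_X(E)$.

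Thus the composition
\[
T_X \xrightarrow{q^{\nabla}} \At_X(E) \to \At_X(E)/\At_X((E,P))
\]
vanishes if and only if $\nabla$ preserves $P$. Taking the contrapositive of this equivalence gives the proposition.

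This is essentially a tautological unwinding of definitions, so there should be no real obstacle; the only substantive check is the identification of $\At_X((E,P))$ with the stabilizer of $P$ inside $\At_X(E)$, which one can verify on local trivializations compatible with $P$ (there, writing $\nabla=d+A$ with $A\in \mscr{E}nd(E)\otimes \Omega^1_X$, the image of $q^{\nabla}$ in $\mscr{E}nd(E)/\mscr{E}nd(E,P)\otimes \Omega^1_X$ is exactly the off-block-diagonal part of $A$ with respect to $P$, which is nonzero precisely when $\nabla$ fails to preserve $P$).
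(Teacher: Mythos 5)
Your argument is correct and is essentially the same tautological unwinding used in the cited reference (the paper itself gives no proof, deferring to \cite[Proposition 3.3.2]{landesmanlitt}): one identifies $\At_X((E,P))$ as the subsheaf of filtration-preserving first-order operators, observes that $q^{\nabla}(v)=\nabla_v$, and notes that the composite into $\mscr{E}nd(E)/\mscr{E}nd(E,P)$ is exactly the $\mc{O}_X$-linear second fundamental form of $P$, which vanishes if and only if $\nabla$ preserves $P$. No gaps.
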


\begin{defn}
Let $\mathrm{Art}_k$ denote the category of Artin $k$-algebras. For a variety $X/k$, we denote by $\Def_X$ the functor 
\[
\mathrm{Art}_k\rightarrow \rm{Set}
\]
sending $A$ to the set of flat deformations of $X$ over $A$. Similarly, for  a vector bundle $E$ equipped with a (possibly trivial) filtration $P$ on $X$,  we have the similarly defined deformation functors $\Def_{X, E}$ and $\Def_{X, E, P}$.
\end{defn}
The   tangent space to $\Def_{X, E, P}$ can be computed with the Atiyah bundle as follows.
\begin{prop}[{\cite[\S 2.2]{bhh}}]
We have a canonical bijection 
\[
\Def_{X,E,P}(k[\epsilon]/\epsilon^2)\xrightarrow{\sim} H^1(X, \At((E,P))).
\]
 
\end{prop}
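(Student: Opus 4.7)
The plan is to construct the bijection explicitly via \v{C}ech cocycles and then verify bijectivity by comparison with the long exact sequence coming from the defining extension of $\At_X((E,P))$.

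First, I would choose an affine open cover $\{U_i\}$ of $X$ fine enough that any first-order deformation of $(X,E,P)$ becomes trivial on each $U_i$; this is possible because affine schemes, together with vector bundles filtered by subbundles on them, are rigid to first order. Given a deformation $(\tilde{X}, \tilde{E}, \tilde{P})$ over $k[\epsilon]/\epsilon^2$, pick for each $i$ a trivialization
\[
\phi_i: (U_i[\epsilon], E|_{U_i}[\epsilon], P|_{U_i}[\epsilon]) \xrightarrow{\sim} (\tilde{X}, \tilde{E}, \tilde{P})|_{U_i}.
\]
On each overlap $U_{ij}$, the composition $\phi_i^{-1}\phi_j$ is an automorphism of the trivial deformation $(U_{ij}[\epsilon], E|_{U_{ij}}[\epsilon], P|_{U_{ij}}[\epsilon])$ reducing to the identity modulo $\epsilon$. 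Such data unpacks to a derivation of $\mc{O}_{U_{ij}}$ together with a compatible filtration-preserving first-order automorphism of $E|_{U_{ij}}$ lifting it, which is exactly the data of a section of $\At_X((E,P))$ over $U_{ij}$ by the defining sequence in Definition~\ref{defn:atiyahbundle}.

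A routine check shows that $\{\phi_i^{-1}\phi_j\}$ satisfies the cocycle condition on triple overlaps, and that changing the trivializations alters the cocycle by a coboundary. This produces a well-defined map
\[
\Phi:\ \Def_{X,E,P}(k[\epsilon]/\epsilon^2) \longrightarrow H^1(X, \At_X((E,P))).
\]

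For bijectivity, I would apply the five lemma to the map between the long exact sequence coming from
\[
0 \to \mscr{E}nd(E,P) \to \At_X((E,P)) \xrightarrow{\delta} T_X \to 0
\]
and the analogous exact sequence of deformation functors, namely the standard sequence relating $\Def_{X,E,P}(k[\epsilon]/\epsilon^2)$ to deformations of $X$ alone (with tangent space $H^1(X, T_X)$) and to deformations of $(E,P)$ with $X$ fixed (with tangent space $H^1(X, \mscr{E}nd(E,P))$). Since each of these outer identifications is obtained by the same \v{C}ech recipe as $\Phi$, the two long exact sequences match, and the five lemma concludes. The main obstacle is purely bookkeeping: verifying that $\Phi$ respects the connecting morphisms, in particular that the obstruction in $H^2(\mscr{E}nd(E,P))$ to lifting a deformation of $X$ to one of $(X,E,P)$ agrees with the coboundary in the cohomology sequence. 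No essentially new idea is required beyond the standard \v{C}ech formalism, and the result is well-known in the filtered deformation theory literature.
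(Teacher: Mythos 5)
Your argument is correct and is precisely the standard \v{C}ech/torsor identification that the paper delegates entirely to the citation of \cite[\S 2.2]{bhh}, where the same construction (local triviality of first-order deformations on affines, transition automorphisms valued in the filtered Atiyah algebra, compatibility with the extension by $\mscr{E}nd(E,P)$) is carried out. The only cosmetic difference is that you verify bijectivity via the five lemma rather than by directly gluing trivial deformations along a given cocycle to produce the inverse map, which is harmless here since $X$ is a curve and all the relevant $H^2$ obstruction groups vanish.
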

Suppose we have a first order deformation of $X$, corresponding to a class $\tilde{s}\in H^1(X, T_X)$. Isomonodromic deformation gives a deformation of $(E, \nabla)$ to $\tilde{X}$, and in particular we have a deformation of the bundle $E$ itself. By the previous proposition we obtain a class in  $H^1(X, \At(E))$ which we will denote by $\mathrm{iso}(\tilde{s})$.
\begin{prop}\label{prop:isoclass}
We have the equality
\[
\mathrm{iso}(\tilde{s})= q^{\nabla}_*(\tilde{s}),
\]
where $q^{\nabla}$ denotes the section to $\delta: \At(E)\rightarrow T_X$ from Proposition~\ref{prop:atiyahsection}, and $q^{\nabla}_*: H^1(X, T_X)\rightarrow H^1(X, \At(E))$ is the  induced map on cohomologies.
\end{prop}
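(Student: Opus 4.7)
My plan is to verify the identity by explicit Čech representatives, using the splitting $q^\nabla$ to make the isomonodromic extension concrete at the level of transition functions.

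First I would choose an affine cover $\{U_i\}$ of $X$ on which $E$ is trivial, with chosen frames $e_i$ and transition matrices $g_{ij}\in\GL_2(\mc{O}_{U_{ij}})$, in which $\nabla$ is represented by connection matrices $A_i\in\mathrm{End}(E)(U_i)\otimes\Omega^1_{U_i}$ satisfying the standard gauge relation. In this presentation, the Atiyah sequence $0\to\mscr{E}nd(E)\to\At(E)\to T_X\to 0$ has the sheaf $\At(E)$ described, over $U_i$, as pairs $(\theta,\phi)$ with $\theta\in T_X(U_i)$ and $\phi\in\mscr{E}nd(E)(U_i)$; the splitting $q^{\nabla}$ sends $\theta$ to $(\theta,\nabla_{\theta})$, i.e.\ the first-order differential operator $\nabla_{\theta}\colon E\to E$. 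A class $\tilde{s}\in H^1(X,T_X)$ is represented by a Čech cocycle $\{\theta_{ij}\}$, and then $q^{\nabla}_*(\tilde{s})$ is represented by the cocycle $\{q^{\nabla}(\theta_{ij})\}\in C^1(\{U_i\},\At(E))$.

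Next I would describe the isomonodromic deformation in the same language. The first-order thickening $\tilde{X}=X_{\tilde{s}}$ is obtained by gluing the trivial thickenings $U_i[\epsilon]$ using the automorphisms $\mathrm{id}+\epsilon\,\theta_{ij}$. To transfer the crystal $(E,\nabla)$ across this gluing, the infinitesimal/crystal site equivalence in (\ref{eqn:equivcrystal}) tells us that parallel transport along $\theta_{ij}$ identifies the fiber of $E$ over $U_{ij}\subset U_i$ with that over $U_{ij}\subset U_j$; concretely, the deformed transition matrices are $g_{ij}+\epsilon\,\nabla_{\theta_{ij}}(g_{ij})$, and the resulting bundle $\tilde{E}$ is precisely the vector bundle part of the isomonodromic deformation. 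In terms of the Atiyah sheaf, the deformed transition data of $(\tilde{X},\tilde{E})$ over $U_{ij}$ is the operator $\nabla_{\theta_{ij}}$, which is exactly the local section $q^{\nabla}(\theta_{ij})$ of $\At(E)$.

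Finally I would identify the resulting class in $H^1(X,\At(E))$. Under the bijection $\Def_{X,E}(k[\epsilon]/\epsilon^2)\xrightarrow{\sim}H^1(X,\At(E))$, the pair $(\tilde{X},\tilde{E})$ is represented by the cocycle $\{q^{\nabla}(\theta_{ij})\}$, so $\mathrm{iso}(\tilde{s})=q^{\nabla}_*(\tilde{s})$ at the level of Čech cocycles, and hence in cohomology. The main technical point I expect to spend the most time on is checking that the Čech formula for the deformed transition functions really matches the crystal-theoretic extension that defines $\mathrm{iso}(\tilde{s})$ via the equivalence (\ref{eqn:equivcrystal}); this is a direct verification using the fact that the horizontal sections of $\nabla$ form the objects of the crystal, but it requires carefully tracking how parallel transport in the $\epsilon$-direction is encoded as a modification of the $g_{ij}$.
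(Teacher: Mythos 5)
Your argument is correct and is essentially the argument the paper is relying on: the paper gives no proof beyond citing \cite[\S 2.1, 2.2, \S 4.1]{bhh}, and the \v{C}ech computation you outline --- gluing $\tilde{X}$ by $\mathrm{id}+\epsilon\,\theta_{ij}$, using the crystal/parallel-transport structure to lift this to the gluing $\mathrm{id}+\epsilon\,\nabla_{\theta_{ij}}$ of $\tilde{E}$, and observing that $\nabla_{\theta_{ij}}=q^{\nabla}(\theta_{ij})$ as sections of $\At(E)$ --- is exactly the standard verification carried out there. The one place to be careful, which you rightly flag, is the explicit formula for the deformed transition matrices (your $g_{ij}+\epsilon\,\nabla_{\theta_{ij}}(g_{ij})$ should be unwound into symbol and connection-matrix terms relative to the chosen frames), but this is bookkeeping rather than a gap.
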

\begin{proof}
This is explained in \cite[\S 2.1, 2.2, \S 4.1]{bhh}. 
\end{proof}
We will also make use of the following lemma in the proof of our main result.
\begin{lem}[{\cite[Lemma 3.5.8]{landesmanlitt}}]\label{lemma:presfilt}
Let $X, (E, \nabla), P$ be as above. Suppose we have $\tilde{s} \in \mathrm{Def}_{X}(k[\epsilon]/\epsilon^2) \simeq H^1(X, T_X)$ corresponding to a first order deformation   $\mc{X}/\Spec(k[\epsilon]/\epsilon^2)$ of $X/\Spec(k)$. Let $(\mc{E}, \nabla)$ denote the isomonodromic deformation of $(E, \nabla)$ in the direction $\tilde{s}$, and suppose furthermore that the filtration $P$ extends to $\mc{P}$ on $\mc{E}$. Then 
\[
q^{\nabla}(\tilde{s}) \in \ker\big(  H^1(X, \At(E)) \rightarrow H^1(X, \mscr{E}nd(E)/ \mscr{E}nd((E,P))  \big).
\]
\end{lem}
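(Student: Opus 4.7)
The plan is to realize $q^{\nabla}(\tilde{s})$ as the image in $H^1(X, \At(E))$ of a class that lives in $H^1(X, \At((E,P)))$, and then read off vanishing in the quotient from the long exact sequence.

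First, I would set up the short exact sequence of sheaves
\[
0 \to \At((E,P)) \to \At(E) \to \mscr{E}nd(E)/\mscr{E}nd((E,P)) \to 0.
\]
The inclusion on the left is the obvious one: a first-order differential operator on $E$ whose symbol lies in $T_X\otimes \mathrm{id}_E$ and which preserves $P$ is a fortiori such an operator without the preservation constraint. A snake-lemma computation applied to the two defining sequences of $\At((E,P))$ and $\At(E)$ from Definition~\ref{defn:atiyahbundle} (both have $T_X$ as quotient, with the map $T_X\to T_X$ the identity) identifies the cokernel with $\mscr{E}nd(E)/\mscr{E}nd((E,P))$.

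Next, I would invoke the Atiyah-bundle description of the tangent space: an analogue of the proposition identifying $\Def_{X,E}(k[\epsilon]/\epsilon^2)\simeq H^1(X,\At(E))$ gives
\[
\Def_{X,E,P}(k[\epsilon]/\epsilon^2)\xrightarrow{\sim} H^1(X,\At((E,P))),
\]
and the forgetful map $\Def_{X,E,P}\to \Def_{X,E}$ is induced, by functoriality, by the inclusion $\At((E,P))\hookrightarrow \At(E)$ on $H^1$. The hypothesis that $P$ extends to $\mc{P}$ on $(\mc{E},\nabla)$ says precisely that the first-order deformation $(\mc{X}, \mc{E})$ of $(X,E)$ obtained by isomonodromy lifts to a deformation $(\mc{X},\mc{E},\mc{P})$ of $(X,E,P)$. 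So some class $\alpha \in H^1(X, \At((E,P)))$ maps to the deformation class of $(\mc{X},\mc{E})$ in $H^1(X,\At(E))$; by Proposition~\ref{prop:isoclass}, the latter equals $q^{\nabla}(\tilde{s})$.

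Finally, the long exact sequence in cohomology attached to the displayed short exact sequence shows that $\mathrm{image}\bigl(H^1(X,\At((E,P)))\to H^1(X,\At(E))\bigr)$ coincides with $\ker\bigl(H^1(X,\At(E))\to H^1(X,\mscr{E}nd(E)/\mscr{E}nd((E,P)))\bigr)$. Combining this with the previous paragraph gives the claim. The only substantive point to verify is that the forgetful map on deformation functors really is induced by $\At((E,P))\hookrightarrow \At(E)$ on $H^1$, but this is a standard compatibility built into the identifications of deformation spaces with Atiyah-bundle cohomology (e.g.\ \cite[\S 2.2]{bhh}), so no genuine obstacle is expected.
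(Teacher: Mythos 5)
Your argument is correct. The paper does not reprove this lemma---it simply cites \cite[Lemma 3.5.8]{landesmanlitt}---and your route (identifying $q^{\nabla}(\tilde{s})$ with the deformation class of $(\mc{X},\mc{E})$ via Proposition~\ref{prop:isoclass}, lifting that class to $H^1(X,\At((E,P)))$ using the hypothesis that $P$ extends to $\mc{P}$, and concluding from exactness of the cohomology sequence attached to $0\to\At((E,P))\to\At(E)\to\mscr{E}nd(E)/\mscr{E}nd((E,P))\to 0$) is precisely the standard proof given there, with the only point deserving care (compatibility of the forgetful map on deformation functors with the inclusion of Atiyah bundles) correctly flagged and indeed supplied by \cite[\S 2.2]{bhh}.
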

As in \cite{landesmanlitt}, there is also a logarithmic version, whose statement we omit.

\section{Proofs of main results}
We equip $\mscr{S}_k$ with the polarization $\mc{L}\defeq \Omega^{\rm{top}}$, the sheaf of top forms, which is ample by Proposition~\ref{prop:amplehodge}. In the case when $\mscr{S}_k$ is non-compact, i.e. in the Hilbert modular variety case,  let $\overline{\mscr{S}}_k$ denote its minimal compactification using $\mc{L}$; we   denote the extension of $\mc{L}$ to $\overline{\mscr{S}}_k$ by the same symbol. In this case, let $\partial \mscr{S}_k$ denote the boundary $\overline{\mscr{S}}_k-\mscr{S}_k$. The line bundles $\omega_{\tau}$ also extend to line bundles on $\overline{\mscr{S}}_k$, which we denote by the same symbols.
\begin{rmk}[Case of Hilbert modular varieties] 
In the Hilbert case, suppose  $X$ is a non-compact curve with smooth compactification $\bar{X}$, and $f:X\rightarrow \mscr{S}_k$ is some map, which extends to $\bar{f}: \bar{X}\rightarrow \overline{\mscr{S}}_k$. Further, let $B\defeq f^*A$, where $A$ is the universal abelian scheme on $\mscr{S}_k$; then  $\bar{f}^*\mc{L}$ is the Hodge line bundle of  the N\'eron model of $B$. See \cite[page 11]{krishnapal} for an explanation.

Further, still in the Hilbert case, we may pick some toroidal compactification $\mscr{S}_k^*$, which fits into a sequence 
\[
\mscr{S}_k^*\rightarrow \overline{\mscr{S}}_k\rightarrow \mscr{S}_k.
\]
Now $\mscr{S}_k^*$ carries a universal semi-abelian scheme, and the line bundles $\omega_{\tau}$'s extend to line bundles on $\mscr{S}_k^*$. In the case of $(g,n)=(0, 4)$, the  map $f$ extends canonically to $f^*:\bar{X}\rightarrow \mscr{S}_k^*$, which allows us to pullback the $\omega_{\tau}'s$. We denote these pullbacks by $\bar{f}^*\omega_{\tau}$'s by a slight abuse of notation. Finally, each of the $f^*H^{dR}_{1, \tau}$'s extends to a vector bundle with connection with logarithmic poles, and moreover the  actions of $F, V$ also extend: see \cite[\S~4]{katotrihan} for details.  
\end{rmk}

\begin{defn} For curves of type $(g,n)=(0,4)$ or $(2,0)$, let  $\mc{M}(\mscr{S}_k,d)$ denote the moduli of  maps $\bar{f}: \bar{X}\rightarrow \overline{\mscr{S}}_k$ from curves $(\bar{X}, p_1, \cdots, p_n)$ of type  $(g,n)$, such that 
\begin{itemize} 
\item $\bar{f}^{-1}(\partial \mscr{S}_k)\subset \{p_1, \cdots , p_n\}$, and 
\item $\bar{f}^*\mc{L}$ has degree $d$.
\end{itemize} 
This  comes equipped with the map 
\[
\pi: \mc{M}(\mscr{S}_k, d) \rightarrow \mc{M}_{g,n}
\]
which takes a map to the underlying curve.
\end{defn}
The bound on the polarization implies immediately the following
\begin{prop}
$\mc{M}(\mscr{S}_k, d)$ is of finite type.
\end{prop}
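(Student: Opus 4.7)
The plan is to reduce finite-typeness to the boundedness of a Hilbert scheme by using the ample polarization $\mc{L}$ on $\overline{\mscr{S}}_k$. Since $\mc{L}$ is ample, some sufficiently large power $\mc{L}^{\otimes N}$ is very ample and gives a closed embedding $\overline{\mscr{S}}_k \hookrightarrow \mb{P}^M$ for some $M$. Any map $\bar{f}: \bar{X} \to \overline{\mscr{S}}_k$ with $\deg \bar{f}^*\mc{L} = d$ then becomes a map $\bar{X} \to \mb{P}^M$ whose pullback of $\mc{O}(1)$ has degree $Nd$.

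Next I would represent such data by graphs inside $\bar{X} \times \overline{\mscr{S}}_k$. First, since $\mc{M}_{g,n}$ is of finite type for $(g,n) = (2,0)$ or $(0,4)$, it suffices to work over a finite-type base parametrizing the curves $(\bar{X}, p_1, \dots, p_n)$, and to show that the relative moduli of maps with the prescribed numerical invariants is of finite type. Given a map $\bar{f}$, its graph $\Gamma_{\bar{f}} \subset \bar{X} \times \overline{\mscr{S}}_k$ is a closed subscheme whose Hilbert polynomial, computed with respect to the product polarization coming from any fixed ample on $\bar{X}$ and $\mc{L}^{\otimes N}$ on $\overline{\mscr{S}}_k$, is determined by $g$, $n$, and $d$ alone. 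Grothendieck's theorem on Hilbert schemes then gives that the relevant component of the relative Hilbert scheme is of finite type over the moduli of pointed curves. The locus cut out by the condition that a subscheme is the graph of a morphism (i.e.\ projection to $\bar{X}$ is an isomorphism) is open, so we obtain finite-typeness of the moduli of all maps of the right degree.

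It remains to impose the boundary condition $\bar{f}^{-1}(\partial \mscr{S}_k) \subset \{p_1, \dots, p_n\}$, which is automatic in the genus two case (where $\mscr{S}_k$ is already compact) and in the $(0,4)$ Hilbert case amounts to requiring that the preimage of the Cartier divisor $\partial \mscr{S}_k \subset \overline{\mscr{S}}_k$ is supported in the marked points. This is a locally closed condition on the universal map (its complement is the closed locus where some unmarked fiber hits $\partial\mscr{S}_k$, which is constructible and can be excluded), so it carves out a locally closed substack of a finite-type stack, hence is itself of finite type.

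The only mildly delicate step is the last one, verifying that the boundary condition is locally closed rather than just constructible, but this follows from properness of the projection from the universal graph combined with the fact that $\partial\mscr{S}_k$ is a Cartier divisor and $\{p_1,\ldots,p_n\}$ varies nicely over $\mc{M}_{g,n}$. Everything else is a direct application of Hilbert scheme boundedness.
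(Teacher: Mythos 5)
Your argument is correct and is exactly the standard Hilbert-scheme boundedness argument that the paper implicitly invokes: the paper offers no proof beyond the remark that the bound on $\deg \bar{f}^*\mc{L}$ ``implies immediately'' the finite-typeness, and your write-up (embed via a power of the ample $\mc{L}$, bound the Hilbert polynomial of graphs in $\bar{X}\times\overline{\mscr{S}}_k$, apply Grothendieck's theorem relatively over $\mc{M}_{g,n}$, cut out the open graph locus and then the boundary condition) is the intended justification. The only quibble is that the boundary condition $\bar{f}^{-1}(\partial\mscr{S}_k)\subset\{p_1,\dots,p_n\}$ is in general only constructible rather than locally closed on the base, but this does not affect the conclusion, since a constructible subset of a noetherian finite-type ambient stack is a finite union of locally closed substacks and hence still of finite type.
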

\begin{defn}
We say the map $\bar{f}$  is generically ordinary if none of the $h_{
\tau}$'s for $\tau  \in \Sigma_{\infty}-\mathrm{S}_{\infty}$ vanishes identically on $X$. Let $\mc{M}(\mscr{S}_k, d)^{\rm{ord}}\subset \mc{M}(\mscr{S}_k, d)$ denote the substack of generically ordinary  maps. 
\end{defn}

\begin{prop}\label{prop:boundeddegree}
Suppose $\mb{L}$ is a rank two local system on $X$ which  is pulled back from $\mscr{S}_k$ by a generically ordinary map, such that $\mb{L}$ has unipotent monodromy in the case of $(g,n)=(0,4)$. Then $\mb{L}$ arises as pullback by a map in  $\mc{M}(\mscr{S}_k, d)^{\rm{ord}}$ for $d<N$, where $N$ depends only on $p$ and $[F:\mb{Q}]$.
\end{prop}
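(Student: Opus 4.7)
The plan is to bound each of the integers $d_\tau \defeq \deg \bar{f}^*\omega_\tau$ for $\tau \in \Sigma_\infty - \mathrm{S}_\infty$; by Proposition~\ref{prop:amplehodge} this then bounds $d = \deg \bar{f}^*\mc{L} = 2\sum_\tau d_\tau$. The strategy parallels the proof of Theorem~\ref{thm:genus2complex}: a Kodaira--Spencer computation forces $d_{\tau_0}\leq 1$ for at least one index $\tau_0$, and then the partial Hasse invariants, which under generic ordinarity are all nonzero when pulled back to $\bar X$, propagate this bound cyclically along the $\sigma$-orbit of $\tau_0$.

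For the Kodaira--Spencer step, the identification used in the proof of Proposition~\ref{prop:amplehodge} decomposes
\[
\Omega^1_{\mscr{S}_k} \;\cong\; \bigoplus_{\tau \in \Sigma_\infty - \mathrm{S}_\infty} \omega_\tau^{\otimes 2}.
\]
Since $\bar f$ is non-constant, the cotangent map $\bar f^*\Omega^1_{\overline{\mscr{S}}_k}(\log\partial\mscr{S}_k) \to \Omega^1_{\bar X}(\log D)$ is nonzero on at least one summand; equivalently, for some $\tau_0$ the second fundamental form of $\bar f^*\omega_{\tau_0}\subset \bar f^*H^{dR}_{1,\tau_0}$ with respect to the pulled-back Gauss--Manin connection gives a nonzero map
\[
\theta_{\tau_0}\colon \bar f^*\omega_{\tau_0} \;\longrightarrow\; \bar f^*\omega_{\tau_0}^{-1}\otimes \Omega^1_{\bar X}(\log D),
\]
where Proposition~\ref{prop:degzero} is used to identify the quotient $\bar f^*H^{dR}_{1,\tau_0}/\bar f^*\omega_{\tau_0}$ with $\bar f^*\omega_{\tau_0}^{-1}$ up to a line bundle of degree zero. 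In both $(g,n)=(2,0)$ and $(0,4)$ one has $\deg \Omega^1_{\bar X}(\log D)=2g-2+n=2$, so comparing degrees in the nonzero map $\theta_{\tau_0}$ forces $d_{\tau_0}\leq 1$.

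For the propagation step, define $T\tau\defeq \sigma^{-n_\tau}\tau$ as in Notation~\ref{notation:sigma}. Hypothesis~\ref{hypothesis}(2) makes $\sigma$ a single cycle of length $[F:\mb Q]$ on $\Sigma_\infty$, whence $T$ is a cyclic permutation of $\Sigma_\infty - \mathrm{S}_\infty$. Generic ordinarity means $\bar f^* h_\tau\colon \bar f^*\omega_\tau \to (\bar f^*\omega_{T\tau})^{\otimes p^{n_\tau}}$ is a nonzero map of line bundles for every $\tau$, forcing
\[
d_\tau \;\leq\; p^{n_\tau}\,d_{T\tau}.
\]
Iterating along the $T$-orbit from $\tau$ until we reach $\tau_0$ and using the previous bound gives, for every $\tau$,
\[
d_\tau \;\leq\; p^{c_\tau}\, d_{\tau_0}\;\leq\; p^{[F:\mb Q]},
\]
where $c_\tau\leq [F:\mb Q]$ is the total $\sigma$-length of the path. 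Summing, $d\leq 2\,[F:\mb Q]\,p^{[F:\mb Q]}=:N$, a bound depending only on $p$ and $[F:\mb Q]$.

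The main technical obstacle I foresee lies in the $(g,n)=(0,4)$ case, where all degree and Higgs computations must be carried out on the toroidal compactification $\mscr{S}_k^*$ rather than on $\mscr{S}_k$. Unipotency of the monodromy at the four punctures should guarantee that $f^*H^{dR}_{1,\tau}$ extends to a vector bundle with logarithmic connection on $\bar X$, that both the Hodge subbundle and the partial Hasse invariants extend, and that the latter remain nonzero whenever $f$ was generically ordinary; these are standard consequences of the log crystalline theory of Kato--Trihan recalled in the remark preceding the statement. Granting these extensions, each of the inequalities above remains valid on $\bar X$ with $\Omega^1_{\bar X}$ replaced by $\Omega^1_{\bar X}(\log D)$, and the argument concludes as above.
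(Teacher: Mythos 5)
Your strategy coincides with the paper's: a Kodaira--Spencer degree count forcing $\deg \bar f^*\omega_{\tau_0}\leq 1$ for some $\tau_0$, followed by propagation of this bound along the $\sigma$-cycle using the nonvanishing of the pulled-back partial Hasse invariants (here the inertness Hypothesis~\ref{hypothesis}(2) is used, as you note, to make the cycle connect every $\tau$ to $\tau_0$), and finally summing via $\big[\bigotimes_\tau \omega_\tau^{\otimes 2}\big]\cong\big[\Omega^{\mathrm{top}}_{\mscr{S}_k}\big]$. However, there is one genuine gap, at the very first step: you assert that non-constancy of $\bar f$ implies the cotangent map $\bar f^*\Omega^1_{\overline{\mscr{S}}_k}(\log \partial\mscr{S}_k)\to \Omega^1_{\bar X}(\log D)$ is nonzero on some summand. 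In characteristic $p$ this is false: $\bar f$ may factor through an arbitrarily high power of the relative Frobenius of $\bar X$, in which case its differential vanishes identically even though $\bar f$ is non-constant and still generically ordinary (the pulled-back Hasse invariants are just $p^n$-th powers of the original ones, hence still nonzero). Such compositions also have unbounded degree, so no bound on $\deg \bar f^*\mc{L}$ for the \emph{given} map can possibly hold --- which is exactly why the proposition is phrased as ``$\mb{L}$ arises as pullback by \emph{some} map of degree $<N$'' rather than as a bound on the original map.

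The paper closes this gap before running the Kodaira--Spencer argument: it invokes \cite[Tag 0CD2]{stacksproject} to factor $\bar f$ through a power of Frobenius and replace it by a generically separable map $\bar f'$ inducing the same local system (Frobenius pullback does not change the \'etale local system), and only for $\bar f'$ does one get a nonzero Kodaira--Spencer map for some $\tau_0$, hence $\deg \bar f'^*\omega_{\tau_0}=1$ by the genus-two (resp.\ $(0,4)$) degree count. Once this untwisting step is inserted, the remainder of your argument --- the inequality $d_\tau\leq p^{n_\tau}d_{T\tau}$, its iteration around the cycle, and the logarithmic extensions needed in the Hilbert case --- agrees with the paper's proof.
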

\begin{proof}
Recall that  we have an equality
\[ 
\bigg[\bigotimes_{\tau \in \Sigma_{\infty}-\mathrm{S}_{\infty}}\omega_{\tau}^{\otimes 2}\bigg]\simeq \bigg[\Omega^{\rm{top}}_{\mscr{S}_k}\bigg]
\]
of classes of line bundles on $\mscr{S}_k$. In the non-compact, i.e. Hilbert case, we have an equality of classes of line bundles on $\mscr{S}_k^*$ of the form  $\bigg[\bigotimes_{\tau \in \Sigma_{\infty}-\mathrm{S}_{\infty}}\omega_{\tau}^{\otimes 2}\bigg]\simeq \bigg[\Omega^{\rm{top}}_{\mscr{S}_k}(D)\bigg]$ for some effective divisor $D$ supported on the boundary\footnote{this follows from the fact that the Kodaira-Spencer maps now have logarithmic singularities along the boundary}. In either case, if $\mb{L}$ arises from  a  generically ordinary map
\[
\bar{f}: \bar{X} \rightarrow \mscr{S}_k
\]
of degree $d$, then  
\[
\sum_{\tau \in \Sigma_{\infty}-\mathrm{S}_{\infty}} \deg(\bar{f}^*\omega_{\tau}) \geq d;
\]
therefore it suffices to  find some map $\bar{f}'$ inducing $\mb{L}$, with  the left hand side bounded above by $N=N(p, [F:\mb{Q}])$.

Since $d>0$, there exists  $\tau \in \Sigma_{\infty}-\mathrm{S}_{\infty}$  such that $\bar{f}^*\omega_{\tau}$ has positive degree.  Then, by the generically ordinary and inert hypotheses, it follows that all $\bar{f}^*\omega_{\tau}$ have positive degrees, for $\tau\in \Sigma_{\infty}-\rm{S}_{\infty}$: indeed, the $h_{\tau}$'s furnish non-trivial maps between all the $f^*\omega_{\tau}$'s. On the other hand, by \cite[\href{https://stacks.math.columbia.edu/tag/0CD2}{Tag 0CD2}]{stacksproject}, we may assume that $\bar{f}$ is generically separable; now since  the Kodaira-Spencer map is an isomorphism on $\mscr{S}_k$,  there exists $\tau_0$ such that 
\begin{equation}\label{eqn:ks}
\bar{f}^*\mathrm{KS}: \bar{f}^*\omega_{\tau_0} \rightarrow \Omega^1_{\bar{X}}(Z) \otimes \bar{f}^*(H^{dR}_{1, \tau_0}/\omega_{\tau_0}).
\end{equation}
is non-zero. Here $Z\subset \bar{X}$ denotes the natural degree four divisor in the case of $(g,n)=(0,4)$, and the connection has logarithmic singularities since we are assuming $\mb{L}$ has unipotent monodromy. 

The following  is  the same as in the case of complex  curves, i.e. in the proof of Theorem~\ref{thm:genus2complex}, and we therefore omit the proof.
\begin{claim}
The line bundle $\bar{f}^*\omega_{\tau_0}$ has degree one.
\end{claim}
Now let $\tau_1$ be such that $\sigma^{-n_{\tau_{-1}}}\tau_{-1}=\tau_0$, where the notation is as in Notation~\ref{notation:sigma}. Again by the  ordinary hypothesis, 
\[
p^{n_{\tau_{-1}}}\deg f^*\omega_{\tau_0}\geq \deg \omega_{\tau_{-1}},
\]
and hence $\deg \omega_{\tau_{-1}}\leq p^{n_{\tau_{-1}}}$ by the claim above. Iterating this we see that all the $f^*\omega_{\tau}$ have  degrees bounded above in terms of $[F:\mb{Q}]$ and $p$, and so the same is true for $d$, as required. This concludes the proof of the proposition.

\end{proof}

\begin{proof}[Proof of Theorem~\ref{thm:mainmodp}]
We first treat the case of genus two curves. We proceed by induction on $\dim \mscr{S}_k$, with the dimension zero case being automatic. 


First suppose that the map inducing $\mb{L}$ is generically ordinary; by Proposition~\ref{prop:boundeddegree}, we may assume it is induced by a map $\bar{f}$ of degree $<N=N(p, [F:\mb{Q}])$. Let $\mc{M}_2(\mscr{S}_k)^{\rm{ord}}$ be the union of all $\mc{M}_2(\mscr{S}_k,d)^{\rm{ord}}$ with $d<N$. Recall that we have a map
\[
\pi: \mc{M}_2(\mscr{S}_k)^{\rm{ord}}\rightarrow \mc{M}_2,
\] 
and we must  show that the image of $\pi$ has dimension zero.

We  argue by contradiction, so let us suppose that  the image of $\pi$ has positive dimension. By passing to an open subset $U$ of $\Imag(\pi)$, we may find a $U'$ such that we have a diagram
\begin{center}
\begin{tikzcd}
 & V \arrow[r] \arrow[d, "\beta"] & \mc{M}_2(\mscr{S}_k)^{\rm{ord}} \arrow[d, "\pi"]  \\
U' \arrow[ru, "\alpha"] \arrow[r, "h"] &  U \arrow[r, "\iota"] & \mc{M}_2
\end{tikzcd}
\end{center}
where  
\begin{itemize}
\item $h: U' \rightarrow U$ is  finite \'etale, 
\item $\beta \circ \alpha =h$, 
\item $\iota$ is an open immersion into $\Imag(\pi)$, and 
\item the square is Cartesian.
\end{itemize}

Now we may pick $s \in U'(k)$ as well as a first order deformation $\tilde{s} \in U'(k[\epsilon]/\epsilon^2)$.  Let $X$ be the genus two curve corresponding to the image of  $s$  in $\mc{M}_2$, and let $\mc{X}/\Spec(k[\epsilon]/\epsilon^2)$ denote the infinitesimal deformation of $X$ corresponding to $\tilde{s}$. 

We therefore have a map 
\[
\tilde{f}: \mc{X} \rightarrow \mscr{S}_k\otimes \Spec(k[\epsilon]/\epsilon^2),
\]
corresponding to the image of $\alpha(\tilde{s})$ in $\mc{M}_2(\mc{S}_k)$.
For each $\tau\in \Sigma_{\infty}-\mathrm{S}_{\infty}$, consider the flat vector bundles
\[
(E, \nabla)\defeq f^*(H_{1, \tau}^{dR}, \nabla_{GM}),
\]
\[
(\mc{E}, \nabla) \defeq \tilde{f}^*(H_{1, \tau}^{dR}, \nabla_{GM})
\]
on $X$ and $\mc{X}$ respectively; here $\nabla_{GM}$ denotes the (restriction to $H_{1, \tau}^{dR}$ of the Gauss-Manin connection).  

From the proof of Proposition~\ref{prop:boundeddegree}, we can find $\tau$ such that the Kodaira-Spencer map
\[
f^*\omega_{\tau} \rightarrow f^*(H_{1, \tau}^{dR}/\omega_{\tau}) \otimes \Omega^1_{X}
\]
is non-zero, and that $f^*\omega_{\tau}$ has degree one. We denote by $\Fil^1$ the subbundle $f^*\omega_{\tau}$ of $E$, and also by $\Fil^0$ the bundle $E$ itself.

Now $(\mc{E}, \nabla)$ is the isomonodromic deformation of $(E, \nabla)$, and moreover the filtration $P\defeq \Fil^1=f^*\omega_{\tau}$ on $E$ deforms to $\mc{E}$. 

Let $\mathrm{iso}(\tilde{s})=q^{\nabla}(\tilde{s}) \in H^1(X, \At(E))$ denote the class given by isomonodromic deformation in the direction $\tilde{s}$, as in Proposition~\ref{prop:isoclass}. By Lemma~\ref{lemma:presfilt}, we have 
\begin{equation}\label{eqn:inkernel} 
q^{\nabla}(\tilde{s}) \in \ker\big(  H^1(X, \At(E)) \rightarrow H^1(X, \mscr{E}nd(E)/ \mscr{E}nd((E,P))  \big).
\end{equation}
On the other hand, the map 
\[
H^1(X, T_X) \rightarrow H^1(X, \At(E)) \rightarrow H^1(X, \mscr{E}nd(E)/ \mscr{E}nd((E,P))
\]
is induced by a map 
\[
T_X \rightarrow \mscr{E}nd(E)/\mscr{E}nd(E, P)
\]
as in Proposition~\ref{prop:nonzeromap}, which is moreover non-zero as $\Fil^1$ is not preserved by $\nabla$. Note that  
\[
\mscr{E}nd(E)/ \mscr{E}nd(E, P) \simeq \mc{H}om(\Fil^1, \Fil^0/\Fil^1).
\]
By Proposition~\ref{prop:degzero}, $\bigwedge^2 H_{1, \tau}^{dR}$ has degree zero, and therefore $\mc{H}om(\Fil^1, \Fil^0/\Fil^1)$ has degree $-2$, which   agrees with that of $T_X$ since $X$ is  of genus two. Therefore the map $T_X\rightarrow \mscr{E}nd(E)/\mscr{E}nd(E,P)$ is in fact an isomorphism, and the induced map 
\[
H^1(X, T_X) \rightarrow H^1(X, \mscr{E}nd(E)/ \mscr{E}nd(E, P))
\]
is as well. On the other hand, by (\ref{eqn:inkernel}) the non-zero class $\tilde{s}\in H^1(X, T_X)$ is mapped to zero under this map, giving us a contradiction.  This completes the proof under the hypothesis that $X$ is generically ordinary.

On the other hand, suppose $X$ is not generically ordinary, and let $\rm{T}\subset \Sigma_{\infty}-\mathrm{S}_{\infty}$ denote the set of $\tau$'s for which $h_{\tau}$ vanishes on $X$. If $f$ is non-constant, then certainly $\mathrm{T}\subsetneq \Sigma_{\infty}-\mathrm{S}_{\infty}$. Therefore we have 
\[
f:X \rightarrow \mscr{S}_{k, \rm{T}}.
\]
By Theorem~\ref{thm:txmain}, the latter is a $(\mb{P}^1)^N$-bundle over $\mscr{S}(G_{\mathrm{S}(\mathrm{T})})_k$, for some $N\geq 0$. If the image of $f$ lies in a $(\mb{P}^1)^N$-fiber, then the local systems pulled back to $X$ are certainly trivial, so we may assume we have a non-constant map
\[
f': X\rightarrow \mscr{S}(G_{\mathrm{S}(\mathrm{T})})_k. 
\]
The latter has dimension strictly smaller than $\mscr{S}_k$, and so we are done by induction. The argument in the case of $(g,n)=(0,4)$, in which case the only non-trivial case is that of Hilbert modular varieties, is exactly the same, with connections replaced by logarithmic connections.
\end{proof}
\printbibliography[]

\end{document}